 \theoremstyle{definition}
 \numberwithin{equation}{section}
\newtheorem{theorem}{Theorem}[section]
\newtheorem{lemma}[theorem]{Lemma}
\theoremstyle{definition}
\theoremstyle{remark}
\newtheorem{remark}[theorem]{Remark}
\begin{document}

\title{Uniform Bound of Sobolev Norms of Solutions to 3D Nonlinear
Wave Equations with Null Condition}
\author{Fan Wang\footnote{School of
Mathematical Sciences, Fudan University, Shanghai 200433, P.R.China.
{\it Email: wangf767@gmail.com}}}
\date{}
\maketitle

\begin{abstract}
This article concerns the time growth of Sobolev norms of classical
solutions to the 3D quasi-linear wave equations with the null
condition. Given initial data in $H^s\times H^{s-1}$ with compact supports, the global
well-posedness theory has been established independently
by Klainerman \cite{Klainerman_1986} and Christodoulou \cite{Christodoulou_1986},
respectively, for a relatively large integer $s$. However,  the highest order Sobolev
energy, namely, the $H^s$ energy of solutions may have a logrithmatic growth in time. In this paper, we show that the $H^s$ energy of solutions
is also uniformly bounded for $s \geq 5$. The proof employs the generalized energy method
of Klainerman, enhanced by weighted $L^2$ estimates and the ghost weight introduced by Alinhac.
\end{abstract}

\section{Introduction}%----------------------------introduction
In this paper we study the time growth rate of the highest Sobolev norm of
solutions to the nonlinear wave equations of the form
\begin{align}
\Box u=B_{\lambda\mu\nu}\partial_\lambda u \partial_\mu\partial_\nu u,\label{Equation}\\
u(0, \cdot)=\varphi,\ \ \ \ \partial_tu(0, \cdot)=\psi,\label{data}
\end{align}
%in this paper Latin indices range from 1 to 3,
in three spatial dimensions. Here the Greek indices $\lambda, \mu, \nu$ range from 0 to 3, in this paper,
Latin induces range from 1 to 3, and the Einstein convention is applied over repeated indices.
The D'Alembertian, which acts on scalar-valued
functions $u$: $[0,T)\times \mathbb{R}^3\rightarrow \mathbb{R}$, is denoted by
$\Box =\partial^2_{tt}-\Delta$. $B_{\lambda\mu\nu}$ are constants which satisfies the following symmetry:
\begin{equation}
B_{\lambda\mu\nu}=B_{\lambda\nu\mu}.
\label{symmetric}
\end{equation}
For the initial data $\varphi$, $\psi$,
we assume that
\begin{equation}
(\varphi, \psi) \in H^{s}_0(B_R)\times H^{s-1}_0(B_R),\quad s \geq 5, \label{Data}
\end{equation}
where $B_R$ is a ball of radius $R$ for some $R > 0$, which centers at the origin.

We denote the bilinear form coming from nonlinearities by
\begin{equation}
N(u,v)=B_{\lambda\mu\nu}\partial_\lambda u \partial_\mu\partial_\nu v.
\label{Nonlinearity}
\end{equation}
Suggested by S. Klainerman (\cite{Klainerman_1982}), the nonlinearities in \eqref{Equation} are called to satisfy the so-called null condition
if
\begin{equation}
B_{\lambda\mu\nu} X_\lambda X_\mu X_\nu=0,
\label{Null}
\end{equation}
for all $X \in \mathcal{N}$, where $\mathcal{N}$ is the hypersurface defined as
$$\mathcal{N}=\{X \in \mathbb{R}^{4}: X_0^2-(X_1^2+X_2^2+X_3^2)=0\}.$$
 In this paper we set $X_0=-1$ without loss of generality.

To place our result in context, we review a few highlights from the existence theory of nonlinear wave equations.
The global existence of the solutions to wave equation in two and three dimensions have
been studied by many authors and we will not attempt to exhaust references in this paper. The initial value problem for small solutions of 3D
quasilinear wave equations with quadratic nonlinearities is almost globally well-posed \cite{F. John_1984}, and
in general this is sharp \cite{F. John_1981}.
The global well-posedness of classical solutions to the Cauchy problem for the quasi-linear wave equations \eqref{Equation}-\eqref{data} was established
in 1986 independently by Klainerman
\cite{Klainerman_1986} and Christodoulou \cite{Christodoulou_1986}, under the assumption that the nonlinearities satisfy the null condition \eqref{Null}, for sufficiently small initial data. More precisely, for initial data in $H^{s}_0(B_R)\times H^{s-1}_0(B_R)$, here $s$ is a relatively large integer, the result in \cite{Klainerman_1986} implies that the (lower order) $H^{s-2}$ energy of solutions is uniformly bounded, while the highest Sobolev energy, namely, $H^s$ energy may have a logrithmatic
growth in time. The highest Sobolev energy of solutions of Christodoulou in \cite{Christodoulou_1986} also has a kind of time growth. Here and in what follows, by $H^s$ energy of a solution $u(t, x)$ we mean
\begin{equation}
\mathcal{H}^s(u(t)) = \sup_{t > 0}\|\partial_tu(t, \cdot)\|_{H^{s-1}}^2 + \sup_{t > 0}\|\nabla u(t, \cdot)\|_{H^{s-1}}^2.
\end{equation}
%respectively,

In this paper, we prove that the $H^s$ energy of solutions is also uniformly
bounded starting from a $H^{s}_0(B_R)\times H^{s-1}_0(B_R)$ data. Our main result is as follows:

\begin{theorem}\label{Main}
Let $s\geq 5$. Assume that the nonlinear terms in (\ref{Equation}) satisfy
the symmetry condition (\ref{symmetric}) and null condition (\ref{Null}).
Then the initial value problem for (\ref{Equation}) with initial data (\ref{data})
satisfying
$$E_{s}(u(0))<\epsilon$$
is globally well-posed for a sufficiently small $\epsilon > 0$  and the highest order
energy is globally bounded: $E_s(u(t)) \leq C\epsilon$.
\end{theorem}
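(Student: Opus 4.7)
\noindent The plan is to run a continuity argument on a slightly modified higher-order energy, combining Klainerman's vector-field method with Alinhac's ghost weight. Let $Z$ denote the family $\{\partial_\alpha,\Omega_{ij},L_i=t\partial_i+x_i\partial_t,S=t\partial_t+x^i\partial_i\}$ commuting well with $\Box$, let $L=\partial_t+\partial_r$ be the outgoing null derivative, and fix the bounded ghost weight $e^{q(r-t)}$ with $q(\sigma)=\int_{-\infty}^{\sigma}(1+\tau^2)^{-1-\mu/2}\,d\tau$ for some small $\mu>0$. I would work with
\[
\widetilde E_s(u(t))=\sum_{|I|\le s-1}\int_{\bR^3}e^{q(r-t)}\bigl(|\partial_t Z^I u|^2+|\nabla Z^I u|^2\bigr)\,dx,
\]
which is equivalent to $E_s(u(t))$ since $q$ is bounded. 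By local well-posedness and finite speed of propagation, it then suffices to upgrade a bootstrap hypothesis $\widetilde E_s(u(t))\le C\epsilon$ on an interval $[0,T]$ back to $\widetilde E_s(u(t))\le \tfrac{C}{2}\epsilon$; continuity will then force $T=+\infty$ and deliver the uniform bound claimed.

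\noindent The technical work rests on three ingredients. First, commuting $Z^I$ through (\ref{Equation}) yields
\[
\Box(Z^I u)=\sum_{|I_1|+|I_2|\le|I|}c^{I}_{I_1I_2}\,N(Z^{I_1}u,Z^{I_2}u),
\]
with the null structure preserved under the $Z$-algebra. Second, multiplying this equation by $e^{q(r-t)}\partial_t Z^I u$ and integrating by parts produces $\tfrac{d}{dt}\widetilde E_s$ plus a nonnegative spacetime bonus
\[
\sum_{|I|\le s-1}\int_{\bR^3}\frac{e^{q(r-t)}}{(1+|r-t|)^{1+\mu}}\,|LZ^I u|^2\,dx
\]
controlling the outgoing good derivative. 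Third, decomposing $\partial_\lambda=\tfrac12\hat\omega_\lambda\underline L+T_\lambda$ in the null frame, with $\hat\omega=(1,-\omega)$ null and $T_\lambda$ tangential to the light cone, condition (\ref{Null}) forces $B_{\lambda\mu\nu}\hat\omega_\lambda\hat\omega_\mu\hat\omega_\nu=0$, so every summand in $N(Z^{I_1}u,Z^{I_2}u)$ carries at least one tangential (``good'') derivative, giving the schematic bound
\[
|N(Z^{I_1}u,Z^{I_2}u)|\lesssim |TZ^{I_1}u|\,|\partial^2 Z^{I_2}u|+|\partial Z^{I_1}u|\,|T\partial Z^{I_2}u|+|\partial Z^{I_1}u|\,|\partial TZ^{I_2}u|.
\]

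\noindent To close the estimate, for $|I_1|\le s-3$ the Klainerman--Sobolev inequality supplies the sharp decay $|\partial Z^{I_1}u|\lesssim \epsilon^{1/2}(1+t+r)^{-1}(1+|t-r|)^{-1/2}$ together with the enhanced good-derivative rate $|TZ^{I_1}u|\lesssim \epsilon^{1/2}(1+t+r)^{-2}(1+|t-r|)^{1/2}$, the latter coming from algebraically expressing $(t-r)L$ and the angular derivatives in terms of $S$, $L_i$, and $\Omega_{ij}$. High-order factors pair with $\widetilde E_s^{1/2}$ directly, and when a good derivative falls on a top-order factor it is absorbed via Cauchy--Schwarz against the spacetime ghost-weight bonus at \emph{no} cost in time integration. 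Residual $r^{-1}$ and $(t-r)^{-1}$ weights are handled by Hardy-type weighted $L^2$ estimates. The outcome is a differential inequality of the form $\tfrac{d}{dt}\widetilde E_s\le C\epsilon^{1/2}(1+t)^{-1-\mu'}\widetilde E_s$ whose right-hand side is integrable in time, so Gr\"onwall returns a uniform bound. The principal obstacle is the critical pairing $|I_1|=0$, $|I_2|=s-1$: the classical energy estimate produces $\int_0^t \epsilon^{1/2}(1+\sigma)^{-1}\widetilde E_s\,d\sigma$, which is exactly the $\log t$ term appearing in \cite{Klainerman_1986}. Removing it requires \emph{both} the null decomposition (to guarantee a good derivative somewhere in the borderline term) \emph{and} the ghost-weight bonus (to absorb that good derivative as a spacetime quantity rather than integrating it pointwise in time); neither ingredient alone is sufficient, and the interaction between them at top order is where the real work of the proof lies.
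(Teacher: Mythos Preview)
Your overall strategy---Klainerman vector fields, Alinhac's ghost weight, null-frame decomposition, bootstrap, Gr\"onwall with an integrable rate---matches the paper's. But there is a genuine gap at the top-order quasilinear term, and it is exactly where the symmetry hypothesis (\ref{symmetric}) enters, which your proposal never invokes. When $|I_1|=0$ and $|I_2|=|I|=s-1$, the factor $\partial_\mu\partial_\nu Z^{I_2}u$ carries $s+1$ derivatives. Your energy $\widetilde E_s$ and your ghost-weight bonus both live at level $s$ (they control $\partial Z^{I}u$ and $TZ^{I}u$ for $|I|\le s-1$). So in the case where the good derivative from your schematic null bound lands on the high factor---the very case you propose to absorb against the bonus---you are trying to absorb $T\partial Z^{I_2}u$, which is at level $s+1$ and is simply not present in the bonus. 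The null decomposition plus ghost weight do \emph{not} cure the quasilinear loss of derivative; they only remove the $\log t$ once that loss has already been handled.

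The paper fixes this first: it integrates by parts in $\partial_\nu$ and then uses $B_{\lambda\mu\nu}=B_{\lambda\nu\mu}$ to write
\[
B_{\lambda\mu\nu}\,\partial_\mu\Gamma^{a}u\,\partial_\nu\partial_t\Gamma^{a}u=\tfrac12\,B_{\lambda\mu\nu}\,\partial_t\bigl(\partial_\mu\Gamma^{a}u\,\partial_\nu\Gamma^{a}u\bigr),
\]
producing a cubic correction that is absorbed into a modified energy (equivalent to $E_s$ for small solutions). After this step every remaining integrand lives at level $s$, and only then does the paper perform the null decomposition you describe---replacing $\partial_t$ by $(\partial_t+\partial_r)-\partial_r$ and $\partial_i$ by $\tfrac{x_i}{r}\partial_r$ plus angular pieces---and use the ghost-weight term $G$ (which controls both $L$ and the angular derivatives, not $L$ alone) to absorb the high good-derivative contributions. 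If you insert the symmetry/integration-by-parts step and the accompanying energy modification before your null-frame analysis, your outline becomes essentially the paper's proof.
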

The definition of the generalized energy $E_{s}(u(t))$ is given in \eqref{GE}.

Before ending the introduction, let us describe technical features in the present paper at length.
The result of Christodoulou\cite{Christodoulou_1986} relied on the
conformal method, the null condition implies then that the nonlinear terms of
the equations transform into smooth terms, and the problem is reduced to a
local problem with small data. While the proof of Klainerman uses a special energy
inequality for the wave equation, which is obtained by multiplying by an appropriate vector field
with quadratic coefficients (see \cite{Hormander_1997} for an account of both aspects).
They obtained the uniform bound of
the $H^{s - 2}$ energy norm of solutions if the initial data $(\phi, \psi) \in H^{s} \times H^{s-1}$
with compact support, but the $H^s$ energy norm of the solutions may have a slightly
power function growth in time. For elastic waves, in \cite{Sideris_1996}, T. C. Sideris carried out the energy integral argument by deriving a pair of coupled differential inequalities
for a higher order energy and a lower order energy,
 which was used
extensively in later studies
\cite{K. Hidano_2004, K. Hidano_20041, A. Hoshiga_2000, S. Katayama_1993, Sideris_2000, Sideris_2002, K. Yokoyama_2000}
by a lot of authors. The authors distinguished two different levels energy,
the lower order Sobolev energy of which must remain small, while the highest Sobolev energy will
grow polynomially in time. We notice that in the estimation for the highest order
energy, the null condition plays no role and hence the highest order
energy may have a slightly power function growth in time. The null condition
only comes in when estimating the lower order energy, which yields an extra
time decay and a uniform bound for Sobolev norm in $H^{s - 2}$.

In this paper, we show that for 3D scalar quasilinear wave equations with null condition the $H^s$ energy of solutions
is also uniformly bounded for $s \geq 5$. Our analysis is in line with
that of Zhen Lei, T. C. Sideris, Yi Zhou
\cite{LZT_2012},  The proof employs the generalized energy method
of Klainerman, enhanced by weighted $L^2$ estimates and the ghost weight introduced by Alinhac \cite{S. Alinhac_2001}, which was used
to study the global well-posedness of
classical solutions to nonlinear wave equations with null condition in 2D.
The advantage of \cite{LZT_2012} is that one can still use the null condition in the highest
order energy estimate, which is why we can get a uniform bound for the highest
order energy. At the time of this writing, it is still open whether the highest
order energy is uniformly bounded or not in the case of 2D nonlinear wave
equations. The corresponding problem is also open for nonlinear wave systems
with different speeds in both two and three dimensions.
In addition, we wish push forward our method to proof the global existence of the 2D incompressible
isotropic elastodynamics in the future. After we completed this paper, we learn from \cite{S. Alinhac_2010}
page 94, in which a similar results were obtained by Alinhac.
%One may wonder whether the $H^s$ norm actually grows with $t$ or if this is just an artifact of the proof.

This paper is organized as follows: In section 2 we will introduce some notations
and recall some basic tools which are necessary for the  proof of our Theorem.
Then in section 3 and section 4 we present the proof of Theorem 1.1. In this paper,
the letter C denotes various positive and finite constants whose exact values
are unimportant and may vary from line to line.

\section{Preliminaries}
We explain the notation used in this paper, points in $\mathbb{R}^4$ will be denoted by $X=(x_0, x_1, x_2, x_3)=(t,x)$, and $r=|x|$. Partial derivatives will be written as $\partial_k=\frac{\partial}{\partial{x_k}}.$
Following S. Klainerman \cite{Klainerman_1985}, introduce a set of partial differential operators
$$\partial=(\partial_t, \nabla)=(\partial_t, \partial_1, \partial_2, \partial_3),$$
%The angular-momentum operator is defined as
$$\Omega=(\Omega_1, \Omega_2, \Omega_3)=x\wedge\nabla=
(x_2\partial_3-x_3\partial_2, x_3\partial_1-x_1\partial_3, x_1\partial_2-x_2\partial_1),$$
$$L=(L_1, L_2, L_3)=(t\partial_1+x_1\partial_t, t\partial_2+x_2\partial_t, t\partial_3+x_3\partial_t),$$
$$S=t\partial_t+x\cdot\nabla=t\partial_t+r\partial_r.$$
The eleven vector fields will be written as
$\Gamma=(\Gamma_0, \cdots, \Gamma_{10})=(\partial, \Omega, L, S).$
Thus, the spatial derivatives can be decomposed as
\begin{equation}
\nabla=\frac{x}{r}\partial_r-\frac{x}{r^2}\wedge\Omega.
\label{Decompose}
\end{equation}
For multi-indices $a=(a_0, \cdots, a_{10})$, we denote
$$\Gamma^a=\Gamma_0^{a_0}\cdots \Gamma_{10}^{a_{10}}.$$
It is convenient to set $\Gamma^a=1$ if $|a|=0$. Suppose that $b$ and $c$ are
disjoint subsequences of $a$. Then we will say $b+c=a$, if $ b_i+c_i =a_i$ for
all $i$, and $b+c\leq a$, if $b_i + c_i \leq a_i$ for all $i$.

Associated with the D'Alembertian operator, in this paper, we define a new energy
\begin{equation}
E_1(u(t))=\frac{1}{2}\int_{\mathbb{R}^3}(|\partial_tu|^2e^{-q}+|\nabla u|^2e^{-q})dx,
\end{equation}
where $q'=\frac{1}{1+\sigma^2}$, with $q(0) = 0$, $\sigma=t-r$. From which
we know that $q$ is uniformly bounded from below and above.
The generalized energy is defined as
\begin{equation}\label{GE}
E_s(u(t))=\sum_{|a|\leq s-1} E_1(\Gamma^a u(t)),      s=2, 3, \cdots
\end{equation}
%Existence of solutions depends on the energy method which require the system to be symmetric:

In order to proof theorem \ref{Main}, we shall record the following several lemmas.
%引理
The first one just describes the relationship between the usual derivative $\partial$ and the $\Gamma$ operator.
\begin{lemma}\label{lemma1} For any integer $N\geq0$, we have
\begin{equation}
\sum_{|\alpha|= N}|\partial^\alpha u(t, x)|\leq C_N(1+|t-|x||)^{-N}\sum_{|\beta|\leq N}|\Gamma^\beta u(t, x)|,
\label{D}
\end{equation}
where $\alpha$ and $\beta$ are multi-indices and $C_N$ is a positive constant depending on $N$.
\end{lemma}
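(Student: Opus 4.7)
The plan is to prove the estimate by induction on $N$. The case $N=0$ is trivial, and the main work lies in the base case $N=1$; once that is established, higher orders follow from commuting vector fields past partial derivatives.

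For the base case $N=1$, the key algebraic identities are
\begin{align*}
S+\omega_i L_i &= (t+r)(\partial_t+\partial_r),\\
S-\omega_i L_i &= (t-r)(\partial_t-\partial_r),
\end{align*}
with $\omega=x/r$, valid for $r>0$. Solving these for $\partial_t$ and $\partial_r$ and noting that $t+r\ge|t-r|$ gives $|\partial_t u|+|\partial_r u|\le C(1+|t-r|)^{-1}\sum_{|\beta|\le 1}|\Gamma^\beta u|$ in the region $|t-r|\ge 1$. To pass from $\partial_r$ to each $\partial_i$, I use the decomposition (\ref{Decompose}) in the form $\partial_i=\omega_i\partial_r-(x\wedge\Omega)_i/r^2$, so the tangential piece contributes $|\Omega u|/r$, which is harmless when $r$ is comparable to or larger than $|t-r|$. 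In the complementary regime $r\ll|t-r|$, which forces $t\gtrsim|t-r|\ge 1$, I replace the $\omega$-identity by the direct relation $L_i=t\partial_i+x_i\partial_t$, obtaining $\partial_i=L_i/t-(x_i/t)\partial_t$ with $1/t\lesssim 1/|t-r|$ and $|x_i|/t\le 1/3$; this together with the bound already obtained for $\partial_t$ closes the estimate. Finally, on the region $|t-r|\le 1$ the weight $(1+|t-r|)^{-1}$ is bounded below, so the inequality is immediate from the fact that each $\partial_\mu$ is itself one of the $\Gamma_j$.

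For the inductive step from order $N$ to order $N+1$, write $\partial^\alpha=\partial_\mu\partial^{\alpha'}$ with $|\alpha'|=N$ and apply the $N=1$ estimate to the function $v=\partial^{\alpha'}u$, producing
\[
|\partial^\alpha u|\le C(1+|t-r|)^{-1}\sum_{|\beta|\le 1}|\Gamma^\beta\partial^{\alpha'}u|.
\]
A direct check against the definitions of $\partial$, $\Omega$, $L$, $S$ shows that $[\Gamma_j,\partial_\mu]$ is a constant linear combination of partial derivatives, so repeated commutation gives $\Gamma^\beta\partial^{\alpha'}u=\sum c\,\partial^{\alpha''}\Gamma^{\beta''}u$ with $|\alpha''|=N$ and $|\beta''|\le|\beta|$. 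Applying the inductive hypothesis at order $N$ to each term $\partial^{\alpha''}(\Gamma^{\beta''}u)$ yields the factor $(1+|t-r|)^{-N}$ multiplying $\sum_{|\gamma|\le N+1}|\Gamma^\gamma u|$, and combining the two weights produces the desired $(1+|t-r|)^{-(N+1)}$.

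The main obstacle is the case analysis in the base step: the $\omega$-identities degenerate as $r\to 0$, while the formula $\partial_i=L_i/t-\cdots$ degenerates as $t\to 0$, so one must patch together complementary regions and also dispose of the bounded region $|t-r|\le 1$ separately. Once the $N=1$ bound is secured uniformly on $[0,T)\times\mathbb{R}^3$, the induction reduces to routine bookkeeping of commutator constants.
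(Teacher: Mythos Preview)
Your argument is correct. The base case is handled properly: the two identities $S\pm\omega_iL_i=(t\pm r)(\partial_t\pm\partial_r)$ together with $t+r\ge|t-r|$ give the bound on $\partial_t,\partial_r$ in the region $|t-r|\ge 1$, and the split between $r\gtrsim|t-r|$ (where the angular piece $|\Omega u|/r$ is controlled directly) and $r\ll|t-r|$ (where $t\sim|t-r|$ and $\partial_i=t^{-1}L_i-(x_i/t)\partial_t$ applies) covers all of $\{|t-r|\ge 1\}$. The inductive step via $[\Gamma_j,\partial_\mu]\in\mathrm{span}\{\partial_\nu\}$ is the standard reduction and is correctly executed.

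As for comparison with the paper: there is nothing to compare against. The paper does not prove Lemma~\ref{lemma1} but simply cites Li--Zhou \cite{LZ_1995}. Your write-up therefore supplies a self-contained proof where the paper defers to the literature; the argument you give is in fact the classical one underlying that reference.
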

\begin{proof}
See \cite{LZ_1995}.
\end{proof}
%引理
We also list a special case for $|\alpha|= 1$, which will be constantly used in this paper.
\begin{remark}
Suppose $|\alpha|=1,$ we have
\begin{equation}
|\partial u(t,x)\leq C(1+|t-x|)^{-1}\sum_{|\beta|=1}|\Gamma^\beta u(t,x)|.
\end{equation}
\end{remark}
We shall also need the following Sobolev-type inequalities with weight. It's proof appeared
implicity in \cite{KlainermanT_1996}.
\begin{lemma}\label{lemma2}
For any $u(t, x)\in\mathcal{ C}_0^\infty(\mathbb{R}^3)$, we have
\begin{equation}
r^{\frac{1}{2}}|u(t, x)|\leq C\sum_{|\beta|\leq1}||\nabla\Omega^\beta u(t, x)||_{L^2}
\end{equation}
provided that the norm on the right hand side is finite.
\end{lemma}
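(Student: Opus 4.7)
The plan is to combine a one-dimensional radial integration by parts---which produces the $r^{1/2}$ weight---with a Sobolev embedding on the sphere $S^2$ that upgrades an $L^2(S^2)$ angular bound to the pointwise value; Hardy's inequality in $\mathbb{R}^3$ bridges the two.

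First I fix $x = r\omega$ with $r>0$ and $\omega\in S^2$. Since $u(\cdot,\omega)$ is compactly supported on $(0,\infty)$, the fundamental theorem of calculus applied to $s\mapsto s|u(s\omega)|^2$ gives
$$r|u(r\omega)|^2 = -\int_r^\infty |u(s\omega)|^2\,ds - 2\int_r^\infty s\,u(s\omega)\,\partial_s u(s\omega)\,ds.$$
Taking absolute values, using $|\partial_s u|\leq|\nabla u|$, and then integrating over $\omega\in S^2$ and passing to Cartesian coordinates through $dx = s^2\,ds\,d\omega$, I obtain
$$r\int_{S^2}|u(r\omega)|^2\,d\omega \leq \int_{|x|\geq r}\frac{|u|^2}{|x|^2}\,dx + 2\int_{|x|\geq r}\frac{|u||\nabla u|}{|x|}\,dx \leq C\|\nabla u\|_{L^2(\mathbb{R}^3)}^2,$$
the last step by the Hardy inequality $\||x|^{-1}u\|_{L^2}\leq C\|\nabla u\|_{L^2}$ and Cauchy--Schwarz. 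Applying the same argument to $\Omega^\beta u$ for $|\beta|\leq 1$ (which remain compactly supported) yields the $H^1$-type estimate on each sphere,
$$r\sum_{|\beta|\leq 1}\|\Omega^\beta u(r,\cdot)\|_{L^2(S^2)}^2 \leq C\sum_{|\beta|\leq 1}\|\nabla\Omega^\beta u\|_{L^2(\mathbb{R}^3)}^2.$$

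The second and more delicate step upgrades this $H^1$-type angular bound to an $L^\infty(S^2)$ bound. Since $\Omega=(\Omega_1,\Omega_2,\Omega_3)$ restricts to a spanning family of tangent vector fields on each sphere $\{|x|=r\}$, a Sobolev-type embedding on $S^2$ is expected to give
$$\sup_{\omega\in S^2}|u(r\omega)|^2 \leq C\sum_{|\beta|\leq 1}\|\Omega^\beta u(r,\cdot)\|_{L^2(S^2)}^2,$$
which combined with the radial estimate above is precisely the claim. The main obstacle is that $H^1(S^2)\hookrightarrow L^\infty(S^2)$ fails (marginally) in two dimensions, so the last inequality cannot follow from a naive Sobolev embedding. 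To bypass this I would follow the approach implicit in \cite{KlainermanT_1996}: apply the one-dimensional embedding $H^1(\mathbb{R})\hookrightarrow L^\infty(\mathbb{R})$ along a suitable family of great circles through $\omega_0$ and average, effectively trading the missing half-derivative in the angular direction for radial information already harvested in the first step, using the decomposition $\nabla=(x/r)\partial_r-(x/r^2)\wedge\Omega$ from \eqref{Decompose} to freely convert between radial and angular derivatives where needed.
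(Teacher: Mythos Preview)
The paper does not give its own proof of this lemma; it merely records that the argument is implicit in Klainerman--Sideris \cite{KlainermanT_1996}. So there is no detailed proof in the paper to compare against, and your attempt stands on its own.

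Your first step---the radial fundamental theorem of calculus combined with Hardy's inequality---is correct and standard; it yields
\[
r\sum_{|\beta|\leq 1}\|\Omega^\beta u(r,\cdot)\|_{L^2(S^2)}^2\leq C\sum_{|\beta|\leq 1}\|\nabla\Omega^\beta u\|_{L^2(\mathbb{R}^3)}^2.
\]
The genuine gap is in your second step. You correctly identify that $H^1(S^2)\not\hookrightarrow L^\infty(S^2)$, so one cannot pass from the $H^1(S^2)$ bound above to a pointwise bound at fixed $r$. But your proposed workaround is not a proof. Iterated one-dimensional Sobolev on $S^2$ (along great circles, or in $(\theta,\phi)$ coordinates) inevitably produces a mixed second angular derivative---an $\Omega^2 u$ term---so you are forced back to $H^2(S^2)$ regardless. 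And once step~1 is complete there is no mechanism to ``trade'' that missing angular derivative for radial information: the decomposition \eqref{Decompose} does let you extract $\|r^{-1}\Omega^2 u\|_{L^2(\mathbb{R}^3)}$ from $\|\nabla\Omega u\|_{L^2(\mathbb{R}^3)}$, but this is an \emph{integrated} quantity and gives no pointwise-in-$r$ control of $\|\Omega^2 u(r,\cdot)\|_{L^2(S^2)}$, which is what your two-step scheme would require.

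The remedy is not to separate the radial and angular steps. Keep the pointwise radial identity
\[
r|u(r\omega)|^2\leq\int_r^\infty|u(s\omega)|^2\,ds+2\int_r^\infty s|u(s\omega)||\partial_s u(s\omega)|\,ds
\]
and apply the angular Sobolev embedding (which does cost two $\Omega$'s) to the integrand \emph{before} integrating in $s$. Then the $\Omega^2 u$ contribution surfaces only through quantities like $\int_r^\infty\|\Omega^2 u(s,\cdot)\|_{L^2(S^2)}^2\,ds=\|r^{-1}\Omega^2 u\|_{L^2(|x|>r)}^2$, and this \emph{is} dominated by $\|\nabla\Omega u\|_{L^2}^2$ via \eqref{Decompose}; no pointwise-in-$r$ bound on $\|\Omega^2 u(r,\cdot)\|_{L^2(S^2)}$ is ever needed. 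The cross term $s|u||\partial_s u|$ still requires some care in how the two angular derivatives are distributed (or one can argue mode-by-mode in spherical harmonics), but the essential point your sketch misses is that the $\Omega^2$ information must live under the $ds$-integral, not outside it.
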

%\begin{proof} See \cite{Sideris_2000}.\end{proof}

The next lemma is Klainerman's inequalities which will be constantly used in the energy estimates.
\begin{lemma}\label{lemma3} Suppose that $u=u(t,x)$ is a function with compact
support in the variable $x$ for any fixed $t\geq 0$. Then for any integer $N\geq0$, we have
\begin{equation}
\begin{split}
&\ \ \ \ \sum_{|\alpha|\leq N}|\Gamma^\alpha u(t, .)|_{L^\infty}\\
&\leq C(1+t+|x|)^{-\frac{n-1}{p}}(1+|t-x|)^{-\frac{1}{p}}
\sum_{|\alpha|\leq N+[\frac{n}{p}]+1}\|\Gamma^\alpha u(t, .)\|_{L^p},\ \ \ \ \forall t\geq0,
\end{split}
\label{K}
\end{equation}
where $p\geq1$ and $C$ is a positive constant.
\end{lemma}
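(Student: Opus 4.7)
The plan is to prove this Klainerman--Sobolev inequality by a local rescaling argument on a geometrically adapted box, followed by a standard Euclidean Sobolev embedding and a conversion of the rescaled derivatives back to $\Gamma$-derivatives via Lemma \ref{lemma1}.

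First I would reduce to $N=0$: commutator identities among the eleven vector fields show that each $[\Gamma_i,\Gamma_j]$ is a finite linear combination of the $\Gamma_k$'s, so applying the $N=0$ case to $\Gamma^\alpha u$ for $|\alpha|\leq N$ yields the general statement. Fix then a target point $(t,x_0)$ and set $r_0=|x_0|$, $\sigma=1+|t-r_0|$, $\rho=1+t+r_0$. If $\rho\leq 4$ the claim is immediate from a standard $L^p$-to-$L^\infty$ Sobolev embedding on a unit ball, so we may assume $\rho\geq 4$.

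Next I would split into two geometric regimes. In the exterior regime $r_0\geq \rho/4$ (so $r_0\sim \rho$), I center on $x_0$ an anisotropic box $\mathcal{B}$ of radial thickness $\sim\sigma$ and angular extent $\sim r_0$, with volume $\sim r_0^{n-1}\sigma$. The change of variables $z_1=(r-r_0)/\sigma$, $z'=r_0(\omega-\omega_0)$ maps $\mathcal{B}$ diffeomorphically onto a unit cube $Q$, and the Euclidean Sobolev embedding $W^{[n/p]+1,p}(Q)\hookrightarrow L^\infty(Q)$ gives
\begin{equation*}
|u(t,x_0)|\leq C\sum_{|\beta|\leq [n/p]+1}\|\partial_z^\beta u\|_{L^p(Q)}.
\end{equation*}
Pulled back to $x$, the $z$-derivatives become $\sigma\partial_r$ radially and $r_0\nabla_\omega$ tangentially; on $\mathcal{B}$ the tangential ones are essentially linear combinations of the $\Omega_i$, while Lemma \ref{lemma1} combined with $(1+|t-r|)\sim\sigma$ on $\mathcal{B}$ yields $\sigma|\partial_r u|\leq C\sum_{|\gamma|\leq 1}|\Gamma^\gamma u|$. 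Restoring the original volume element turns $\|\cdot\|_{L^p(Q)}$ into $(r_0^{n-1}\sigma)^{-1/p}\|\cdot\|_{L^p(\mathcal{B})}$, producing the desired prefactor $\rho^{-(n-1)/p}\sigma^{-1/p}$. In the complementary interior regime $r_0<\rho/4$, where necessarily $\sigma\sim\rho$, I would instead use an isotropic Euclidean ball of radius $\sim\rho/4$ around $x_0$; the same rescaling argument, contributing $\rho^{-n/p}=\rho^{-(n-1)/p}\rho^{-1/p}\sim\rho^{-(n-1)/p}\sigma^{-1/p}$, delivers the same final bound, with all isotropic derivatives $\rho\partial_i$ controlled by $\Gamma$-derivatives via Lemma \ref{lemma1} since $\rho/\sigma\sim 1$ on the ball.

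The main technical obstacle is the sharp decay rate near the light cone $t=r$: an isotropic box of size $\rho$ centered on $x_0$ would only produce $\rho^{-n/p}$, so the anisotropy of $\mathcal{B}$ in the exterior regime is essential to reach $\rho^{-(n-1)/p}\sigma^{-1/p}$. The bookkeeping that needs care is verifying that on $\mathcal{B}$ both $r$ and $1+|t-r|$ remain comparable to $r_0$ and $\sigma$ respectively, so that the polar Jacobian is uniformly $\sim r_0^{n-1}\sigma$ and Lemma \ref{lemma1} applies with the intended weight. Once the box parameters and the threshold separating the two regimes are pinned down, the weight factors combine to give the stated estimate.
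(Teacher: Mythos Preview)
Your sketch is correct and follows the standard localization-and-rescaling argument that underlies Klainerman's original proof in \cite{Klainerman_1987}: reduce to $N=0$ via the commutator algebra, split into interior and exterior regimes relative to the light cone, use an anisotropic box of radial width $\sim\sigma$ and tangential width $\sim r_0$ in the exterior regime to extract the sharp weight $\rho^{-(n-1)/p}\sigma^{-1/p}$ from a rescaled Sobolev embedding, and convert the rescaled derivatives back to $\Gamma$'s using Lemma~\ref{lemma1}. The only points to watch are that the box must sit inside the annulus where $r\sim r_0$ and $1+|t-r|\sim\sigma$ (so a radial half-width like $\sigma/8$ rather than $\sigma$ is needed), and that the angular parametrization be chosen so that the tangential derivatives are genuinely bounded combinations of the $\Omega_i$.

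The paper itself does not give a proof of this lemma at all; it simply refers the reader to Klainerman \cite{Klainerman_1987}. So your proposal is not a different route from the paper's own argument but rather a fleshed-out version of the cited result, and it is in line with how the inequality is actually proved there.
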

\begin{proof}
See S. Klainerman \cite{Klainerman_1987}.
\end{proof}
\begin{remark}
From lemma \ref{lemma3}, we obtain
$|u(t, x)|\leq C(1+t)^{-\frac{n}{2}}\|u(t, .)\|_{\Gamma, s, 2}$, $s>\frac{n}{2}$, for $|x|\leq\frac{t+1}{2}$. And $|u(t, x)|\leq C(1+t)^{-\frac{n-1}{2}}\|u(t, .)\|_{\Gamma, s, 2}$, $s>\frac{n}{2}$, for $|x|\geq\frac{t+1}{2}$, here
$\|u(t, .)\|_{\Gamma, s, 2}\triangleq \sum_{|\alpha|\leq s}\|\Gamma^\alpha u\|_{L^2}$ and $s$ is a integer.
\end{remark}
The following lemma imply that the operator $\partial_t+\partial_r$ has a $(1+t)^{-1}$ decay in the region $|x|\geq\frac{t+1}{2}$.
\begin{lemma}\label{lemma4}
Suppose $|x|\geq\frac{t+1}{2}$, then we have $|(\partial_t+\partial_r)u|\leq \frac{C}{1+t}|\Gamma u|$.
\end{lemma}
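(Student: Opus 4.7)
The plan is to exhibit $\partial_t+\partial_r$ as a linear combination of the scaling field $S$ and the boosts $L_i$ with coefficients of order $(t+r)^{-1}$, and then convert the $(t+r)^{-1}$ factor into the desired $(1+t)^{-1}$ using the region assumption $|x|\geq (t+1)/2$.

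The first step is a direct algebraic computation. Since $S = t\partial_t + r\partial_r$ and $L_i = t\partial_i + x_i\partial_t$, I would form the radial contraction
\begin{equation*}
\frac{x_i}{r}\,L_i \;=\; \frac{x_i}{r}\bigl(t\partial_i + x_i\partial_t\bigr) \;=\; t\partial_r + r\partial_t,
\end{equation*}
using $\frac{x_i}{r}\partial_i = \partial_r$ and $\sum_i x_i^2 = r^2$. Adding $S$ then gives the identity $S + \frac{x_i}{r} L_i = (t+r)(\partial_t + \partial_r)$, so that
\begin{equation*}
(\partial_t+\partial_r) u \;=\; \frac{1}{t+r}\Bigl(S u + \frac{x_i}{r}\, L_i u\Bigr).
\end{equation*}

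For the second step, the hypothesis $r = |x| \geq \tfrac{t+1}{2}$ yields $t + r \geq t + \tfrac{t+1}{2} = \tfrac{3t+1}{2} \geq \tfrac{1+t}{2}$, hence $(t+r)^{-1} \leq 2(1+t)^{-1}$. Since also $|x_i/r| \leq 1$, taking absolute values in the identity above gives
\begin{equation*}
|(\partial_t+\partial_r) u| \;\leq\; \frac{2}{1+t}\bigl(|Su| + |Lu|\bigr) \;\leq\; \frac{C}{1+t}|\Gamma u|,
\end{equation*}
which is the claimed bound.

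There is no real obstacle in this argument: the lemma reduces to one algebraic identity plus the region constraint. The only point that deserves care is verifying $\frac{x_i}{r}L_i = t\partial_r + r\partial_t$, reflecting the fact that the radial part of the boosts precisely reassembles the combination $t\partial_r + r\partial_t$; everything else is routine bookkeeping.
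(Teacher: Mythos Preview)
Your proof is correct and follows exactly the same route as the paper: both derive the identity $\partial_t+\partial_r=\dfrac{S+\frac{x_i}{r}L_i}{t+r}$ (the paper writes $L_0$ for $S$) and then use the region $|x|\ge\frac{t+1}{2}$ to convert $(t+r)^{-1}$ into $C(1+t)^{-1}$. If anything, you are slightly more explicit than the paper in carrying out this last step.
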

\begin{proof}
Recalling the definitions of $L_0$ and $L_i$,
\begin{equation}
L_0=t\partial_t+x_1\partial_{x_1}+\cdots+x_n\partial_{x_n}=t\partial_t+r\partial_r,
\label{L_0}
\end{equation}
\begin{equation}
L_i=t\partial_{x_i}+x_i\partial_t.
\label{L_1}
\end{equation}
Multiplying (\ref{L_1}) by $x_i$,
\begin{equation}
x_iL_i=tx_i\partial_{x_i}+x^2_i\partial_t=tr\partial_r+r^2\partial_t.
\label{L_x}
\end{equation}
Dividing (\ref{L_x}) by $r$ and adding (\ref{L_0}), we obtain the following identity\\
\begin{equation}
\partial_t+\partial_r=\frac{L_0+\frac{x_i}{r}L_i}{t+r}
\end{equation}
This completes the proof.
\end{proof}
The next result captures the manner in which the null condition will be useful in the course of the energy estimates.
\begin{lemma}\label{N-inequality}
Suppose that the nonlinear form $N(u, u)$ satisfies the null condition (\ref{Null}). For $r\geq\frac{t+1}{2}$, we have
\begin{equation}
|B_{\lambda\mu\nu}\partial_\lambda u \partial_\mu \partial_\nu u|\leq\frac{C}{1+t}
[|\Gamma u||\partial^2 v|+|\partial u||\partial\Gamma u|+\langle t-r\rangle|\partial u||\partial^2 u|]
\end{equation}
in which $\langle x\rangle=(1+|x|^2)^{\frac{1}{2}}$
\end{lemma}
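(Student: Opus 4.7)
The plan is to perform a null-frame decomposition of each derivative appearing in $B_{\lambda\mu\nu}\partial_\lambda u\,\partial_\mu\partial_\nu u$, separating a ``bad'' component along the outgoing null direction from ``good'' components tangent to the outgoing light cone. The null condition \eqref{Null} will annihilate the pure ``triple-bad'' piece, while every remaining term carries either the extra decay $(1+t)^{-1}$ (provided by the good components via Lemma \ref{lemma4}) or the factor $1/r\le 2/(1+t)$ (provided by commutators and coefficient derivatives, since $r\ge(1+t)/2$ in our region).

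Concretely, set $\omega_0=-1$ and $\omega_i=x_i/r$, so that $\omega\in\mathcal{N}$, and decompose
$$\partial_\mu = -\tfrac{\omega_\mu}{2}L_- + \bar{\partial}_\mu,\qquad L_- = \partial_t-\partial_r,$$
with $\bar{\partial}_0 = \tfrac{1}{2}L_+$ and $\bar{\partial}_i = \tfrac{\omega_i}{2}L_+ + T_i$, where $L_+=\partial_t+\partial_r$ and $T_i = \partial_i - \omega_i\partial_r$. Lemma \ref{lemma4} gives $|L_+u|\le C(1+t)^{-1}|\Gamma u|$, and the angular $T_i$ is a linear combination of $r^{-1}\Omega_j$, so also $|T_iu|\le C(1+t)^{-1}|\Gamma u|$. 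Substituting this decomposition into $\partial_\lambda u$ and into each derivative of $\partial_\mu\partial_\nu u$, and expanding, the pure bad contribution
$$-\tfrac{1}{8}B_{\lambda\mu\nu}\omega_\lambda\omega_\mu\omega_\nu\, L_-u\cdot L_-^{2}u$$
vanishes because $\omega$ is null and the trilinear symbol vanishes on $\mathcal{N}$ by \eqref{Null}.

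Every other term contains at least one good factor $\bar{\partial}_\mu$ or arises from a commutator of type $[\partial_\mu,L_-]$, $[L_-,\bar{\partial}_\nu]$, $[\Gamma,\partial]$, or a differentiated coefficient $\partial_\mu\omega_\nu$. Each such ingredient produces a factor of $C(1+t)^{-1}$ in the exterior region. When the good factor lands on the first argument $\partial_\lambda u$, replacing $\partial_\lambda u$ by $(1+t)^{-1}\Gamma u$ yields the admissible term $C(1+t)^{-1}|\Gamma u||\partial^2u|$; when it lands inside $\partial_\mu\partial_\nu u$, commuting $\Gamma$ past the remaining $\partial$ yields $C(1+t)^{-1}|\partial u||\partial\Gamma u|$. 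The remaining pieces coming from commutators and differentiated coefficients are of the form $C(1+t)^{-1}r^{-1}|\partial u|\cdot|\partial^{\le 1}u|$, which fit into $C\langle t-r\rangle(1+t)^{-1}|\partial u||\partial^2u|$ after invoking Lemma \ref{lemma1} (which furnishes $\langle t-r\rangle|\partial u|\le C|\Gamma u|$) to trade $\Gamma$-regularity for the spatial weight.

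The main obstacle is bookkeeping rather than mathematics: the null condition is used only once, to annihilate the triple-bad term, but one must verify that every commutator and coefficient-derivative remainder produced by the three-fold expansion can be placed into exactly one of the three admissible pigeonholes. The decomposition must be carried out in an order (for instance, decomposing the outermost $\partial_\mu$ first, then $\partial_\nu$, then $\partial_\lambda$) that keeps $L_-$ and $\bar\partial_\mu$ acting on well-controlled arguments, so that the ``bad on bad'' contributions appear exclusively in the single term killed by \eqref{Null}.
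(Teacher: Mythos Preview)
The paper does not prove this lemma; it simply cites Sideris--Tu \cite{Sideris_2002}. Your null-frame decomposition is precisely the standard mechanism behind that result: split each $\partial_\mu$ into a bad piece $-\tfrac{\omega_\mu}{2}L_-$ and a good piece $\bar\partial_\mu$, use the null condition \eqref{Null} to kill the triple-bad term, and absorb every remaining term using the extra $(1+t)^{-1}$ decay of the good derivatives. That core argument is correct.

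There is, however, a genuine confusion in your treatment of the third right-hand term $\langle t-r\rangle|\partial u||\partial^2 u|$. In your $L_\pm$ decomposition, Lemma~\ref{lemma4} gives $|\bar\partial_\mu w|\le C(1+t)^{-1}|\Gamma w|$ directly, with \emph{no} $\langle t-r\rangle$ remainder. Consequently your approach actually proves the inequality with only the first two pigeonholes; the third term is superfluous. The $\langle t-r\rangle$ factor in the Sideris--Tu formulation arises from a slightly different decomposition of the good derivative, namely
\[
\partial_i+\omega_i\partial_t \;=\; \frac{1}{t}L_i \;+\; \frac{t-r}{t}\,\omega_i\,\partial_t,
\]
which produces both a $\tfrac{1}{t}|\Gamma u|$ piece and a $\tfrac{\langle t-r\rangle}{t}|\partial u|$ piece. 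It does \emph{not} come from commutators or differentiated coefficients, and your claim that the lower-order remainders ``fit into $C\langle t-r\rangle(1+t)^{-1}|\partial u||\partial^2 u|$ after invoking Lemma~\ref{lemma1}'' is backwards: Lemma~\ref{lemma1} bounds $\langle t-r\rangle|\partial u|$ \emph{by} $|\Gamma u|$, not the other way around. The commutator contributions $[\Gamma,\partial]u$ you encounter are of size $|\partial u|$ and are absorbed into the second pigeonhole (reading $|\partial\Gamma u|$ as including $|\partial u|$, since $\partial\subset\Gamma$), not the third.

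In short: your strategy is right and in fact yields a slightly sharper estimate than stated, but your bookkeeping paragraph about the $\langle t-r\rangle$ term misidentifies its origin and should be dropped or rewritten.
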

\begin{proof}
See T. C. Sideris \cite{Sideris_2002}.
\end{proof}
The following Hardy type inequality will be used in the final stage of the energy estimates.
\begin{lemma}\label{Hidano} Suppose that, for every $t>0$, smooth functions $u=u(t,x)$
are compactly supported in $\{x\in \mathbb{R}^3: |x|\leq t+R\}$ with suitable constants $C>0$ and $R>0$. Then the inequality
\begin{equation}
\|\frac{1}{\langle t-r\rangle}u(t)\|_{L^2(\mathbb{R}^3)}\leq C_R\|\nabla u(t)\|_{L^2(\mathbb{R}^3)}
\end{equation}
holds for a constant $C_R$ with $C_R\rightarrow\infty$ as $R\rightarrow\infty$
\end{lemma}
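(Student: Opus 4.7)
The plan is to reduce the inequality to the classical Hardy inequality on the ball $B(0, t+R)$ with the distance-to-boundary weight $t+R-r$, via an elementary pointwise comparison of weights on the support of $u(t,\cdot)$.

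First I would establish the pointwise comparison
$$t+R-r \;\leq\; C_R\, \langle t-r\rangle \quad \text{whenever } 0 \leq r \leq t+R,$$
with $C_R$ depending only on $R$. Setting $y=t-r \in [-R,\infty)$ this reduces to checking $(y+R)^2 \leq C_R^2(1+y^2)$ for some $C_R$, which is immediate (one may take $C_R = 2\max(R,1)$). Hence $1/\langle t-r\rangle \leq C_R/(t+R-r)$ on the support of $u(t,\cdot)$, and the desired bound follows once we prove
$$\Bigl\|\frac{u(t)}{t+R-|x|}\Bigr\|_{L^2(\mathbb{R}^3)} \;\leq\; C\,\|\nabla u(t)\|_{L^2(\mathbb{R}^3)}.$$

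The second step is the Hardy inequality on the ball: for smooth $u$ supported in $\{|x|\leq T\}$, one has $\int |u|^2/(T-|x|)^2\, dx \leq 4\int |\nabla u|^2\, dx$ with constant independent of $T$. To prove it, I pass to spherical coordinates, fix $\omega \in S^2$, set $v(r)=u(t,r\omega)$, and integrate by parts using $\frac{d}{dr}(1/(T-r)) = 1/(T-r)^2$:
$$\int_0^T \frac{v^2 r^2}{(T-r)^2}\, dr = -\,2\int_0^T \frac{r^2 v v'}{T-r}\, dr \;-\; 2\int_0^T \frac{r v^2}{T-r}\, dr.$$
The boundary contributions vanish ($v$ vanishes to all orders at $r=T$ by smoothness and the support assumption, while the $r^2$ factor kills the $r=0$ end). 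Dropping the non-positive second term on the right and applying Cauchy--Schwarz to the first yields $A \leq 2\sqrt{AB}$ with $A=\int_0^T r^2 v^2/(T-r)^2\, dr$ and $B=\int_0^T r^2 (v')^2\, dr$, hence $A \leq 4B$. Integrating over $\omega \in S^2$ gives the 3D Hardy inequality, and taking $T=t+R$ combined with the first step completes the proof.

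The one point that needs care is that the Hardy constant on the ball of radius $T$ must be independent of $T$ (and hence of $t$); this is immediate from the scale invariance of the ratio $\|u/(T-|x|)\|_{L^2}/\|\nabla u\|_{L^2}$ under the rescaling $u(x) \mapsto u(x/T)$, which preserves both norms up to the same power of $T$. Only the weight-comparison constant carries $R$-dependence, and it grows linearly in $R$, consistent with the stated behavior $C_R \to \infty$ as $R \to \infty$.
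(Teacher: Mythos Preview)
Your argument is correct. The pointwise weight comparison $t+R-r \le C_R\langle t-r\rangle$ is valid on $[0,t+R]$, and your one-dimensional Hardy computation on the ball is sound: since $u$ is smooth on $\mathbb{R}^3$ and vanishes identically for $|x|>t+R$, continuity forces $u$ and all its derivatives to vanish on the sphere $|x|=t+R$, so the boundary term at $r=T$ indeed drops. The Cauchy--Schwarz step and the angular integration are routine, and the scaling remark confirms that the Hardy constant $4$ is independent of $T=t+R$, so all $R$-dependence enters through the weight comparison.

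The paper does not give its own proof of this lemma; it simply cites Lindblad's 1990 paper. Your proof is therefore a self-contained substitute rather than a different route to the same argument. The approach you take---reducing to the distance-to-boundary Hardy inequality on a ball---is essentially the standard one and is in the spirit of Lindblad's original argument, so there is no substantive methodological divergence to compare.
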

\begin{proof}
See H. Lindblad \cite{Lindblad_1990}.
\end{proof}
%引理
In preparation for the energy estimates, we need to consider the commutation
properties of the vector fields Γ with respect to the nonlinear terms.
It is necessary to verify that the null structure is preserved upon differentiation.
\begin{lemma}\label{Sideris1} Let $u$ be solution of (\ref{Equation}).
Assume that the null condition (\ref{Null}) holds for the nonlinearity in (\ref{Nonlinearity}). Then for $|a|\leq l-1$,
\begin{equation}
\Box \Gamma^a u(t)=\sum_{b+c+d=a} N_d(\Gamma^b u, \Gamma^c u),
\end{equation}
in which each $N_d$ is a quadratic nonlinearity of the form (\ref{Nonlinearity})
satisfying (\ref{Null}). Moreover, if $b+c=a$, then $N_d=N.$
\end{lemma}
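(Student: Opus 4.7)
The plan is to argue by induction on $|a|$. The base case $|a|=0$ is the equation \eqref{Equation} itself, with the single summand $N_0=N$. For the inductive step, one assumes the formula at $a'$ with $|a'|=|a|-1$ and applies one further vector field $\Gamma$ to $\Box\Gamma^{a'}u=\sum N_d(\Gamma^b u,\Gamma^c u)$, using two commutator facts: first, $[\Box,\Gamma]=0$ for $\Gamma\in\{\partial,\Omega,L\}$ while $[\Box,S]=2\Box$, so exchanging $\Box$ and $\Gamma$ either costs nothing or generates an extra copy of $\Box\Gamma^{a'}u$ already in the desired form; and second, each commutator $[\Gamma,\partial_\mu]=-A^\nu_\mu\partial_\nu$ is a constant-coefficient linear combination of partials, with $A=0$ for $\partial$, an infinitesimal spatial rotation for $\Omega$, an infinitesimal Lorentz boost for $L$, and $A=\mathrm{id}$ for $S$.

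Applying $\Gamma$ to a typical inductive summand $B_{\lambda\mu\nu}\,\partial_\lambda\Gamma^b u\,\partial_\mu\partial_\nu\Gamma^c u$ via the Leibniz rule and then pushing $\Gamma$ past each derivative factor produces the ``distributed'' terms $N_d(\Gamma^{b+e}u,\Gamma^c u)+N_d(\Gamma^b u,\Gamma^{c+e}u)$ (where $e$ is the single-index for the extra $\Gamma$), which extend the ranges of $b$ and $c$ but leave $d$ fixed, together with ``correction'' terms in which one derivative has absorbed a commutator factor $[\Gamma,\partial_\cdot]$. Each correction term is again of the form \eqref{Nonlinearity} acting on the same pair $(\Gamma^b u,\Gamma^c u)$, only the coefficient tensor $B$ has been replaced by a $\tilde B$ obtained from $B$ by contracting one of its three indices with the constant matrix $A$; these contribute to the summands in which $|d|$ grows by one.

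The crux, which I expect to be the only substantive step, is to check that each modified coefficient $\tilde B$ still satisfies the null condition \eqref{Null}. Set $f(X)=B_{\lambda\mu\nu}X^\lambda X^\mu X^\nu$; using the partial symmetry $B_{\lambda\mu\nu}=B_{\lambda\nu\mu}$ one finds that $\tilde B_{\lambda\mu\nu}X^\lambda X^\mu X^\nu$ is a scalar multiple of $(AX)\cdot\nabla f(X)$. For $X\in\mathcal{N}$, the null condition on $B$ forces $f|_{\mathcal{N}}\equiv 0$, so $\nabla f(X)$ is conormal to $\mathcal{N}$ at $X$; hence $\tilde B(X,X,X)=0$ whenever $AX$ is tangent to $\mathcal{N}$, i.e.\ whenever the one-parameter flow $e^{sA}$ preserves $\mathcal{N}$. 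This holds in each case at hand, since $\Omega$, $L$, and $S$ generate spatial rotations, Lorentz boosts, and dilations, respectively, all of which are conformal transformations of Minkowski space and hence preserve the null cone. Finally, the ``moreover'' clause is immediate: $d=0$ corresponds to the branch of the recursion in which no commutator has been invoked, so the coefficient remains the original $B$ and $N_d=N$.
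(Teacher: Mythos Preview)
Your inductive argument is essentially the standard one that appears in the references the paper cites (H\"ormander \cite{Hormander_1997} and Sideris--Tu \cite{Sideris_2002}); the paper itself gives no proof beyond those citations, so in that sense you have supplied more than the paper does.

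One point of phrasing deserves tightening. You describe the commutator corrections as producing several separate modified tensors $\tilde B$, ``obtained from $B$ by contracting one of its three indices with the constant matrix $A$,'' and then say ``each modified coefficient $\tilde B$ still satisfies the null condition.'' Taken literally this is not what you verify, and in general it is not true: contracting $B$ with $A$ on a single index need not yield a null form. What \emph{is} true---and what your computation $\tilde B_{\lambda\mu\nu}X^\lambda X^\mu X^\nu = (AX)\cdot\nabla f(X)$ actually shows---is that the \emph{sum} of the three corrections (one for each of $\partial_\lambda,\partial_\mu,\partial_\nu$) is a null form, because
\[
\sum_{\text{three indices}}\tilde B(X,X,X)=(AX)^\alpha\,\partial_\alpha f(X),
\]
and $AX$ is tangent to $\mathcal N$ while $\nabla f$ is conormal. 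Since all three corrections act on the same pair $(\Gamma^b u,\Gamma^c u)$, they may be combined into a single new $N_{d'}$, and it is this combined form whose null structure you check. Once you make that grouping explicit, the argument is complete and matches the cited proofs.
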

\begin{proof}
See L. H\"{o}rmander \cite{Hormander_1997} (see also T. C. Sideris \cite{Sideris_2002}).
\end{proof}

\section{Energy estimates}
In this section, we will derive a priori nonlinear energy estimates for
the system (\ref{Equation}). To complete the proof of theorem \ref{Main},
let us first say a few words on the plan of our energy integral argument.
For initial data $(\varphi, \psi) \in H^s_0(B_R)\times H^{s-1}_0(B_R), s \geq 5$, let us
assume $E_s(u(t))<2\epsilon$ for all $0\leq t<T_0$.
%there $M > 0$ is an absolute positive constant.
Then we will show that $T_0 = \infty$.
We use the method by contradiction. Suppose $T_0 < \infty$ is the largest
time so that $E_s \leq 2\epsilon$. We are going to show that $E_s < 2\epsilon$
for all $0 \leq t \leq T_0$. This strict inequality implies that $T_0$ is not
the largest time, which contradicts to the definition of $T_0$.
This contradiction implies that $T_0 = \infty$. Suppose $0\leq t<T_0$ in
what follows. Denote by $\langle\cdot, \cdot\rangle$ the scalar product
in $\mathbb{R}^3$. Following the energy method, we have for each $s\geq 5$,
\begin{equation}
\begin{split}
E'_s(u(t))&=\sum_{|a|\leq s-1} \int_{\mathbb{R}^3}\langle\Box \Gamma^a u(t), \partial_t\Gamma^a u(t)e^{-q}\rangle dx\\
&\ \ \ \ -\sum_{|a|\leq s-1}(\int_{\mathbb{R}^3} \nabla \Gamma^a
u\partial_t\Gamma^a u e^{-q}q'\frac{x}{r}dx+\frac{1}{2}\int_{\mathbb{R}^3} |\partial_t\Gamma^a u|^2 e^{-q}q'dx\\
&\ \ \ \ +\frac{1}{2}\int_{\mathbb{R}^3} |\nabla\Gamma^a u|^2 e^{-q}q'dx)\\
&=\sum_{|a|\leq s-1} \int_{\mathbb{R}^3}\langle\Box \Gamma^a u(t), \partial_t\Gamma^a u(t)e^{-q}\rangle dx\\
&\ \ \ \ -\sum_{|a|\leq s-1}\frac{1}{2}\int_{\mathbb{R}^3} |\partial_t\Gamma^a u\frac{x}{r}+\nabla \Gamma^a u|^2 e^{-q}q'dx,\\
\end{split}
\end{equation}
define
$$G=\sum_{|a|\leq s-1}\frac{1}{2}\int_{\mathbb{R}^3} |\partial_t\Gamma^a u\frac{x}{r}+\nabla \Gamma^a u|^2 e^{-q}q'dx$$
and from Lemma \ref{Sideris1}, we obtain
\begin{equation}
\begin{split}
E'_s(u(t))+G&=\sum_{|a|\leq s-1}\sum_{b+c+d=a} \int_{\mathbb{R}^3}\langle N_d(\Gamma^b u, \Gamma^c u), \partial_t\Gamma^a u(t)e^{-q}\rangle dx\\
&=\sum_{|a|\leq s-1}\sum_{\substack{b+c+d=a\\d=0}} \int_{\mathbb{R}^3}\langle N_d(\Gamma^b u, \Gamma^c u), \partial_t\Gamma^a u(t)e^{-q}\rangle dx\\
&\ \ \ \ +\sum_{|a|\leq s-1}\sum_{\substack{b+c+d=a\\d\neq0}} \int_{\mathbb{R}^3}\langle N_d(\Gamma^b u, \Gamma^c u)), \partial_t\Gamma^a u(t)e^{-q}\rangle dx.\\
\end{split}
\label{E}
\end{equation}
The first terms above with $b=0$, $c=a$, and $|a|=s-1$ is handled with the aid of the symmetry condition
(\ref{symmetric}) which allows us to integrate by parts as follows.
Recalling that from Lemma \ref{Sideris1}, $N_d=N$ when $b+c=a$.
\begin{equation}
\begin{split}
&\ \ \ \ \int_{\mathbb{R}^3}\langle N(u,\Gamma^a u), \partial_t\Gamma^a u\cdot e^{-q}\rangle dx
=B_{\lambda\mu\nu}\int_{\mathbb{R}^3} \partial_\lambda u\partial_\mu\partial_\nu \Gamma^a u\partial_t\Gamma^a u\cdot e^{-q}dx\\
&=B_{\lambda\mu\nu}\int_{\mathbb{R}^3} \partial_\nu(\partial_\lambda u\partial_\mu \Gamma^a u\partial_t\Gamma^a u\cdot e^{-q})dx
-B_{\lambda\mu\nu}\int_{\mathbb{R}^3} \partial_\nu \partial_\lambda u\partial_\mu \Gamma^a u\partial_t\Gamma^a u\cdot e^{-q}dx\\
&\ \ \ \ -B_{\lambda\mu\nu}\int_{\mathbb{R}^3} \partial_\lambda u\partial_\mu \Gamma^au\partial_\nu\partial_t\Gamma^a u\cdot e^{-q}dx
-B_{\lambda\mu\nu}\int_{\mathbb{R}^3} \partial_\lambda u\partial_\mu \Gamma^au\partial_t\Gamma^a u\cdot \partial_\nu e^{-q}dx\\
&=B_{\lambda\mu0}\partial_t\int_{\mathbb{R}^3} \partial_\lambda u\partial_\mu \Gamma^a u\partial_t\Gamma^a u\cdot e^{-q}dx
-B_{\lambda\mu\nu}\int_{\mathbb{R}^3} \partial_\nu \partial_\lambda u\partial_\mu \Gamma^a u\partial_t\Gamma^a u\cdot e^{-q}dx\\
&\ \ \ \ -\frac{1}{2}B_{\lambda\mu\nu}\int_{\mathbb{R}^3} \partial_\lambda u\partial_t(\partial_\mu \Gamma^au\partial_\nu\Gamma^a u)\cdot e^{-q}dx
-B_{\lambda\mu\nu}\int_{\mathbb{R}^3} \partial_\lambda u\partial_\mu \Gamma^au\partial_t\Gamma^a u\cdot \partial_\nu e^{-q}dx\\
&=\frac{1}{2}B_{\lambda\mu\nu}\eta_{\nu\delta}\partial_t\int_{\mathbb{R}^3} \partial_\lambda u\partial_\mu \Gamma^a u\partial_\delta\Gamma^a u\cdot e^{-q}dx
-B_{\lambda\mu\nu}\int_{\mathbb{R}^3} \partial_\nu \partial_\lambda u\partial_\mu \Gamma^a u\partial_t\Gamma^a u\cdot e^{-q}dx\\
&\ \ \ \ +\frac{1}{2}B_{\lambda\mu\nu}\int_{\mathbb{R}^3} \partial_t\partial_\lambda u\partial_\mu \Gamma^au\partial_\nu\Gamma^a u\cdot e^{-q}dx
+\frac{1}{2}B_{\lambda\mu\nu}\int_{\mathbb{R}^3} \partial_\lambda u\partial_\mu \Gamma^au\partial_\nu\Gamma^a u\cdot e^{-q}(-q')dx\\
&\ \ \ \ -B_{\lambda\mu\nu}\int_{\mathbb{R}^3} \partial_\lambda u\partial_\mu \Gamma^au\partial_t\Gamma^a u\cdot \partial_\nu e^{-q}dx,
\end{split}
\label{E_2}
\end{equation}
using the symbol $\eta_{\gamma\delta}=\text{Diag}[1, -1, -1, -1]$.
The first term above can be absorbed into the energy as a lower
order perturbation. Define
\begin{equation}
\widetilde{E}_s(u(t))=E_s(u(t))-\frac{1}{2}B_{\lambda\mu\nu}\eta_{\nu\delta}
\sum_{|a|=s-1}\int_{\mathbb{R}^3} \partial_\lambda u\partial_\mu \Gamma^au\partial_\delta\Gamma^a u\cdot e^{-q}dx.
\end{equation}
The perturbation is bounded by $C\|\nabla u\|_{L^{\infty}}E_s(u(t))$,
but by Sobolev Imbedding theorem, the maximum norm $\|\nabla u\|_{L^{\infty}}$
is controlled by $E_3^{\frac{1}{2}}\leq E_s^{\frac{1}{2}}<2\epsilon$. Thus, for small solutions we have
\begin{equation}
\frac{1}{2}{E}_s(u(t))\leq\widetilde{E}_s(u(t))\leq 2{E}_s(u(t)).
\label{E_P}
\end{equation}
For the last two terms of (\ref{E_2}) we have
\begin{equation}
\begin{split}
&\ \ \ \ \frac{1}{2}B_{\lambda\mu\nu}\int_{\mathbb{R}^3} \partial_\lambda u\partial_\mu \Gamma^au\partial_\nu\Gamma^a u\cdot e^{-q}(-q')dx\\
&\ \ \ \ -B_{\lambda\mu\nu}\int_{\mathbb{R}^3} \partial_\lambda u\partial_\mu \Gamma^au\partial_t\Gamma^a u\cdot \partial_\nu e^{-q}dx\\
&=\frac{1}{2}B_{\lambda\mu\nu}\int_{\mathbb{R}^3} \partial_\lambda u\partial_\mu \Gamma^au\partial_\nu\Gamma^a u\cdot e^{-q}q'dx\\
&\ \ \ \ -B_{\lambda\mu i}\int_{\mathbb{R}^3} \partial_\lambda u\partial_\mu \Gamma^au(\partial_t\Gamma^a u\frac{x_i}{r}+\partial_i\Gamma^a u)\cdot e^{-q}q'dx.\\
\end{split}
\label{E_3}
\end{equation}
Returning to (\ref{E}), we have derive the following energy identity:
\begin{equation}
\begin{split}
&\ \ \ \ E'_s(u(t))+G-\frac{1}{2}B_{\lambda\mu\nu}\eta_{\nu\delta}
\sum_{|a|=s-1}\int_{\mathbb{R}^3} \partial_\lambda u\partial_\mu \Gamma^au\partial_\delta\Gamma^a u\cdot e^{-q}dx\\
&=\frac{1}{2}B_{\lambda\mu\nu}\sum_{|a|=s-1}\int_{\mathbb{R}^3} \partial_t\partial_\lambda u\partial_\mu \Gamma^au\partial_\nu\Gamma^a u\cdot e^{-q}dx\\
&\ \ \ \ -B_{\lambda\mu\nu}\sum_{|a|=s-1}\int_{\mathbb{R}^3} \partial_\nu \partial_\lambda u\partial_\mu \Gamma^a u\partial_t\Gamma^a u\cdot e^{-q}dx\\
&\ \ \ \ -B_{\lambda\mu i}\sum_{|a|= s-1}\int_{\mathbb{R}^3} \partial_\lambda u\partial_\mu \Gamma^au(\partial_t\Gamma^a u\frac{x_i}{r}+\partial_i\Gamma^a u)\cdot e^{-q}q'dx\\
&\ \ \ \ +\frac{1}{2}B_{\lambda\mu\nu}\sum_{|a|= s-1}\int_{\mathbb{R}^3} \partial_\lambda u\partial_\mu \Gamma^au\partial_\nu\Gamma^a u\cdot e^{-q}q'dx\\
&\ \ \ \ +\sum_{|a|= s-1}\sum_{\substack{b+c=a\\c\neq a}} \int_{\mathbb{R}^3}\langle N(\Gamma^b u, \Gamma^c u), \partial_t\Gamma^a u(t)e^{-q}\rangle dx\\
&\ \ \ \ +\sum_{|a|\leq s-2}\sum_{b+c=a} \int_{\mathbb{R}^3}\langle N(\Gamma^b u, \Gamma^c u)), \partial_t\Gamma^a u(t)e^{-q}\rangle dx\\
&\ \ \ \ +\sum_{|a|\leq s-1}\sum_{\substack{b+c+d=a\\d\neq0}} \int_{\mathbb{R}^3}\langle N_d(\Gamma^b u, \Gamma^c u), \partial_t\Gamma^a u(t)e^{-q}\rangle dx\\
&:=J_1+J_2+J_3+J_4+J_5+J_6+J_7.
\label{E_4}
\end{split}
\end{equation}
In what follows, we shall estimate each terms on the right hand side of (\ref{E_4}).\\
{\bf Estimates for $J_3$}.\\
By H\"{o}lder's inequality and Lemma \ref{lemma3}, the terms $J_3$ can be bounded as follows:
\begin{equation}
\begin{split}
J_3&\leq C\int_{\mathbb{R}^3}|\partial u|_{L^\infty}|\partial\Gamma^a ue^{-\frac{q}{2}}||(\partial_t\Gamma^a u\frac{x_i}{r}+\partial_i\Gamma^a u)\cdot e^{-\frac{q}{2}}q'|dx\\
&\leq\frac{C}{1+t}E_sG^{\frac{1}{2}}\leq\frac{C}{(1+t)^2}E_s^2+\epsilon G.
\label{E_6}
\end{split}
\end{equation}
To estimate the other terms on the right-hand side of (\ref{E_4}). We will divide the integral region $\mathbb{R}^3$ into
two parts: {\itshape{inside the cones}} $\{(t, x): |x|\leq\frac{t+1}{2}\}$
and {\itshape{away from the origin}} $\{(t, x): |x|\geq\frac{t+1}{2}\}$\\
%\noindent 不缩进
{\bf Estimates for other terms inside the cones:  $\{(t, x): |x|\leq\frac{t+1}{2}\}$}.\\
\noindent {\itshape{Inside the cones}}. On the region:  $|x|\leq\frac{t+1}{2}$, the terms $J_1, J_2, J_5, J_6, J_7$ can be bounded by
\begin{equation}
\begin{split}
\sum_{|a|\leq s-1}\int_{\mathbb{R}^3} |\partial^2\Gamma^bu\partial\Gamma^cu\partial\Gamma^aue^{-q}|dx
\end{split}
\label{E_5}
\end{equation}
with $|b+c|\leq s-1$ and $|b|\leq s-2$.
%Thus, there is one factor with at most $[\frac{l-1}{2}]+1$ derivatives.
%This term is estimated in $L^\infty$ and the others in $L^2$.
%Note that $[\frac{l-1}{2}]+1+2+2\leq l$, for $l\geq 10$. Combining lemma \ref{lemma3}, we obtain
Noticing that $s\geq 5$, combining with Lemma \ref{lemma3}, we obtain
\begin{equation}
(\ref{E_5})\leq \frac{C}{(1+t)^{\frac{3}{2}}}E^{\frac{3}{2}}_s(u(t)).
\label{E_7}
\end{equation}
For the terms $J_4$, by Lemma \ref{lemma3} again, we have
\begin{equation}
\begin{split}
J_4&\leq C\sum_{|a|= s-1}\int_{\mathbb{R}^3} \partial u\partial\Gamma^au\partial\Gamma^aue^{-q}q'dx\\
&\leq C\sum_{|a|= s-1}\|\partial u\|_{L\infty}\|\partial\Gamma^aue^{-q/2}\|^2_{L^2}\\
&\leq \frac{C}{(1+t)^{\frac{3}{2}}}E^{\frac{3}{2}}_s(u(t)).\\
\end{split}
\label{E_8}
\end{equation}
Combining the estimates (\ref{E_6})(\ref{E_7})(\ref{E_8}) and
taking (\ref{E_P}) into account, we obtain the following energy inequality:
\begin{equation}
\begin{split}
\widetilde{E}'_s(u(t))+G\leq\frac{C}{(1+t)^2}
\widetilde{E}^2_s(u(t))+\frac{C}{(1+t)^{\frac{3}{2}}}\widetilde{E}^{\frac{3}{2}}_s(u(t))+\epsilon G,
\end{split}
\end{equation}
for the portion of the integrals over $|x|\leq\frac{t+1}{2}$.\\
{\itshape{Away from the origin}}. On the region:  $|x|\geq\frac{t+1}{2}$,
we may first focus on the estimates of $J_1$ and $J_2$.
As the proof of $J_1$ and $J_2$ is similar, we just present the proof of $J_1$. We leave the proof of $J_2$
to interested readers.\\
{\bf Estimates for $J_1$}.\\
To bound the terms $J_1$, we sort them out into the following four case:\\
{\bf Case a:} all of $\lambda$, $\mu,$ $\nu$ are not zero, $\lambda$,
$\mu,$ $\nu$ $\in\{1, 2, 3\}$. In order to obtain the uniform Sobolev $H^s$ norms,
the terms $G$ play an important role. First, we split $J_1$ as follows:
\begin{equation}
\begin{split}
&\ \ \ \ \frac{1}{2}B_{\lambda\mu\nu}\sum_{|a|= s-1}\int_{\mathbb{R}^3} \partial_t
\partial_\lambda u\partial_\mu \Gamma^au\partial_\nu\Gamma^a u\cdot e^{-q}dx\\
&=\frac{1}{2}B_{\lambda\mu\nu}\sum_{|a|= s-1}\int_{\mathbb{R}^3} \partial_t\partial_\lambda u(\partial_\mu \Gamma^au+\partial_t \Gamma^au\frac{x_\mu}{r})\partial_\nu\Gamma^a u\cdot e^{-q}dx\\
&\ \ \ \ -\frac{1}{2}B_{\lambda\mu\nu}\sum_{|a|=s-1}\int_{\mathbb{R}^3} \partial_t
\partial_\lambda u\partial_t \Gamma^au\frac{x_\mu}{r}
(\partial_\nu\Gamma^a u+\partial_t \Gamma^au\frac{x_\nu}{r}) e^{-q}dx\\
&\ \ \ \ +\frac{1}{2}B_{\lambda\mu\nu}\sum_{|a|=s-1}\int_{\mathbb{R}^3}
\partial_t\partial_\lambda u\frac{x_\mu}{r}\frac{x_\nu}{r}\partial_t \Gamma^au\partial_t \Gamma^au e^{-q}dx\\
&:=J_{11}+J_{12}+J_{13}.
\end{split}
\end{equation}
We need estimate the above terms one by one. First, in light of Lemma \ref{lemma1}, Lemma \ref{lemma3}:
\begin{equation}
\begin{split}
J_{11}&=\frac{1}{2}B_{\lambda\mu\nu}\sum_{|a|= s-1}\int_{\mathbb{R}^3} \partial_t\partial_\lambda u\frac{1}{\sqrt{q'}}(\partial_\mu \Gamma^au+\partial_t
\Gamma^au\frac{x_\mu}{r})\sqrt{q'}\partial_\nu\Gamma^a u\cdot e^{-q}dx\\
%&\ \ \ \ -\frac{1}{2}B_{\lambda\mu\nu}\sum_{|a|= l-1}\int \partial_t\partial_\mu u\partial_t
%\Gamma^au\frac{x_\nu}{r}\partial_\lambda\Gamma^a u\cdot e^{-q}dx\\
&\leq\frac{1}{2}B_{\lambda\mu\nu}\sum_{|a|=s-1}\int_{\mathbb{R}^3} \partial_t\Gamma u(\partial_\mu \Gamma^au+\partial_t \Gamma^au\frac{x_\mu}{r})\sqrt{q'}\partial_\nu\Gamma^a u\cdot e^{-q}dx\\
%&\ \ \ \ -\frac{1}{2}B_{\lambda\mu\nu}\sum_{|a|= l-1}\int \partial_t\partial_\mu u\partial_t \Gamma^au\frac{x_\nu}{r}\partial_\lambda\Gamma^a u\cdot e^{-q}dx\\
&\leq \frac{C}{1+t}E_sG^{\frac{1}{2}}\leq\frac{C}{(1+t)^2}E_s^2+\epsilon G.
\end{split}
\end{equation}
Similarly
\begin{equation}
\begin{split}
J_{12}\leq \frac{C}{1+t}E_sG^{\frac{1}{2}}\leq\frac{C}{(1+t)^2}E_s^2+\epsilon G.
\end{split}
\end{equation}
In order to use the null condition, using the spatial derivatives decompose (\ref{Decompose}), we split the terms $J_{13}$ as follows:
%The term $J_{13}$ is bounded as follows
\begin{equation}
\begin{split}
J_{13}&=\frac{1}{2}B_{\lambda\mu\nu}\sum_{|a|= s-1}\int_{\mathbb{R}^3} \partial_t\partial_\lambda u\frac{x_\mu}{r}\frac{x_\nu}{r}\partial_t \Gamma^au\partial_t \Gamma^au e^{-q}dx\\
&=\frac{1}{2}B_{\lambda\mu\nu}\sum_{|a|=s-1}\int_{\mathbb{R}^3} \partial_t\partial_r u\frac{x_\lambda}{r}\frac{x_\mu}{r}\frac{x_\nu}{r}\partial_t \Gamma^au\partial_t \Gamma^au e^{-q}dx\\
&\ \ \ \ -\frac{1}{2}B_{\lambda\mu\nu}\sum_{|a|= s-1}\int_{\mathbb{R}^3} \partial_t(\frac{x}{r^2}\wedge \Omega)_\lambda u\frac{x_\mu}{r}\frac{x_\nu}{r}\partial_t \Gamma^au\partial_t \Gamma^au e^{-q}dx\\
&:=J_{131}+J_{132}.
\end{split}
\end{equation}
Using Lemma \ref{lemma3}, the second terms are bounded by
\begin{equation}
\begin{split}
J_{132}&\leq\frac{C}{1+t}\sum_{|a|= s-1}\int_{\mathbb{R}^3} \partial\Gamma u\partial \Gamma^au\partial \Gamma^au e^{-q}dx\\
&\leq\frac{C}{(1+t)^2}E_s^{\frac{3}{2}},
\end{split}
\end{equation}
for $|x|\geq\frac{t+1}{2}$.\\
{\bf{case b:}} Only one of $\lambda,\mu,\nu$ is zero.\\
1)For $\lambda=0$, $\mu$, $\nu$ $\in\{1, 2, 3\}$, we have
\begin{equation}
\begin{split}
&\ \ \ \ \frac{1}{2}B_{0\mu\nu}\sum_{|a|= s-1}\int_{\mathbb{R}^3} \partial_t\partial_t u\partial_\mu \Gamma^au\partial_\nu\Gamma^a u\cdot e^{-q}dx\\
&=\frac{1}{2}B_{0\mu\nu}\sum_{|a|= s-1}\int_{\mathbb{R}^3} \partial_t(\partial_t+\partial_r) u\partial_\mu \Gamma^au\partial_\nu\Gamma^a u\cdot e^{-q}dx\\
&\ \ \ \ -\frac{1}{2}B_{0\mu\nu}\sum_{|a|=s-1}\int_{\mathbb{R}^3} \partial_t\partial_r u\partial_\mu \Gamma^au\partial_\nu\Gamma^a u\cdot e^{-q}dx\\
&:=K_1+K_2.
\end{split}
\end{equation}
By Lemma \ref{lemma4}, Lemma \ref{lemma3}, the first terms $K_1$ can be bounded as follows:
\begin{equation}
\begin{split}
K_1&\leq\frac{1}{2}B_{0\mu\nu}\sum_{|a|= s-1}\int_{\mathbb{R}^3} \frac{\partial_t\Gamma u}{1+t}\partial_\mu \Gamma^au\partial_\nu\Gamma^a u\cdot e^{-q}dx\\
&\leq\frac{C}{(1+t)^2}E_s^{\frac{3}{2}}.
\end{split}
\end{equation}
%We may also write
On the other hand, for the second terms $K_2$, we split it as follows:
\begin{equation}
\begin{split}
K_2&=-\frac{1}{2}B_{0\mu\nu}\sum_{|a|= s-1}\int_{\mathbb{R}^3} \partial_t\partial_r u\partial_\mu \Gamma^au\partial_\nu\Gamma^a u\cdot e^{-q}dx\\
&=-\frac{1}{2}B_{0\mu\nu}\sum_{|a|= s-1}\int_{\mathbb{R}^3} \partial_t\partial_r u(\partial_\mu \Gamma^au+\partial_t \Gamma^au\frac{x_\mu}{r})\partial_\nu\Gamma^a u\cdot e^{-q}dx\\
&\ \ \ \ +\frac{1}{2}B_{0\mu\nu}\sum_{|a|= s-1}\int_{\mathbb{R}^3} \partial_t\partial_r u\partial_t \Gamma^au\frac{x_\mu}{r}(\partial_\nu\Gamma^a u+\partial_t \Gamma^au\frac{x_\lambda}{r})\cdot e^{-q}dx\\
&\ \ \ \ -\frac{1}{2}B_{0\mu\nu}\sum_{|a|= s-1}\int_{\mathbb{R}^3} \partial_t\partial_r u\partial_t \Gamma^au\frac{x_\mu}{r}\partial_t \Gamma^au\frac{x_\nu}{r}\cdot e^{-q}dx\\
&:=K_{21}+K_{22}+K_{23}.
\end{split}
\end{equation}
The terms $K_{21}, K_{22}$ are handled in a similar fashion as $J_{11}$.\\
\begin{equation}
\begin{split}
K_{21}+K_{22}\leq \frac{C}{1+t}E_sG^{\frac{1}{2}}\leq\frac{C}{(1+t)^2}E_s^2+\epsilon G.
\end{split}
\end{equation}
2)Similarly, for $\mu=0$, $\lambda,$ $\nu$ $\in\{1, 2, 3\}$, we proceed as before:
\begin{equation}
\begin{split}
&\ \ \ \ \frac{1}{2}B_{\lambda0\nu}\sum_{|a|= s-1}\int_{\mathbb{R}^3} \partial_t\partial_\lambda u\partial_\nu \Gamma^au\partial_t\Gamma^a u\cdot e^{-q}dx\\
&=\frac{1}{2}B_{\lambda0\nu}\sum_{|a|= s-1}\int_{\mathbb{R}^3} \partial_t\partial_\lambda u(\partial_\nu \Gamma^au+\partial_t \Gamma^au\frac{x_\nu}{r})\partial_t\Gamma^a u\cdot e^{-q}dx\\
%&\ \ \ \ -\frac{1}{2}B_{0\mu\nu}\sum_{|a|= l-1}\int \partial_t\partial_\mu u\partial_t \Gamma^au\frac{x_\nu}{r}\partial_t\Gamma^a u\cdot e^{-q}dx\\
&\ \ \ \ -\frac{1}{2}B_{\lambda0\nu}\sum_{|a|= s-1}\int_{\mathbb{R}^3} \partial_t\partial_r u\partial_t \Gamma^au\frac{x_\lambda}{r}\frac{x_\nu}{r}\partial_t\Gamma^a u\cdot e^{-q}dx\\
&\ \ \ \ +\frac{1}{2}B_{\lambda0\nu}\sum_{|a|= s-1}\int_{\mathbb{R}^3} \partial_t(\frac{x}{r^2}\wedge \Omega)_\lambda u\partial_t \Gamma^au\frac{x_\nu}{r}\partial_t\Gamma^a u\cdot e^{-q}dx\\
%\nabla=\frac{x}{r}\partial_r-\frac{x}{r^2}\wedge \Omega
&:=L_1+L_2+L_3.
\end{split}
\end{equation}
3)For $\nu=0$, $\lambda$, $\mu$ $\in\{1, 2, 3\}$, we have
\begin{equation}
\begin{split}
&\ \ \ \ \frac{1}{2}B_{\lambda\mu0}\sum_{|a|= s-1}\int_{\mathbb{R}^3} \partial_t\partial_\lambda u\partial_t \Gamma^au\partial_\mu\Gamma^a u\cdot e^{-q}dx\\
&=\frac{1}{2}B_{\lambda\mu0}\sum_{|a|= s-1}\int_{\mathbb{R}^3} \partial_t\partial_\lambda u\partial_t\Gamma^a u(\partial_\mu \Gamma^au+\partial_t \Gamma^au\frac{x_\mu}{r})\cdot e^{-q}dx\\
%&\ \ \ \ -\frac{1}{2}B_{0\mu\nu}\sum_{|a|= l-1}\int \partial_t\partial_\mu u\partial_t \Gamma^au\frac{x_\nu}{r}\partial_t\Gamma^a u\cdot e^{-q}dx\\
&\ \ \ \ -\frac{1}{2}B_{\lambda\mu0}\sum_{|a|=s-1}\int_{\mathbb{R}^3} \partial_t\partial_r u\partial_t \Gamma^au\frac{x_\lambda}{r}\frac{x_\mu}{r}\partial_t\Gamma^a u\cdot e^{-q}dx\\
&\ \ \ \ +\frac{1}{2}B_{\lambda\mu0}\sum_{|a|= s-1}\int_{\mathbb{R}^3} \partial_t(\frac{x}{r^2}\wedge \Omega)_\lambda u\partial_t \Gamma^au\frac{x_\mu}{r}\partial_t\Gamma^a u\cdot e^{-q}dx\\
%\nabla=\frac{x}{r}\partial_r-\frac{x}{r^2}\wedge \Omega
&:=M_1+M_2+M_3.
\end{split}
\end{equation}
For the estimate of the terms $L_1$, $M_1$, we proceed as in $J_{11}$ to obtain
\begin{equation}
\begin{split}
L_1+M_1\leq \frac{C}{1+t}E_sG^{\frac{1}{2}}\leq\frac{C}{(1+t)^2}E_s^2+\epsilon G.
\end{split}
\end{equation}
The terms $L_3, M_3$ are handled in a similar fashion as $J_{132}$\\
\begin{equation}
\begin{split}
L_3+M_3\leq\frac{C}{(1+t)^2}E_s^{\frac{3}{2}}.
\end{split}
\end{equation}
{\bf Case c:} two of $\lambda$, $\mu,$ $\nu$ are zero.\\
1)For $\lambda=\mu=0$, $\nu$ $\in\{1, 2, 3\}$, we have
\begin{equation}
\begin{split}
&\ \ \ \ \frac{1}{2}B_{00\nu}\sum_{|a|= s-1}\int_{\mathbb{R}^3} \partial_t\partial_t u\partial_\nu \Gamma^au\partial_t\Gamma^a u\cdot e^{-q}dx\\
&=\frac{1}{2}B_{00\nu}\sum_{|a|= s-1}\int_{\mathbb{R}^3} \partial_t\partial_t u(\partial_\nu \Gamma^au+\partial_t \Gamma^au\frac{x_\nu}{r})\partial_t\Gamma^a u\cdot e^{-q}dx\\
&\ \ \ \ -\frac{1}{2}B_{00\nu}\sum_{|a|= s-1}\int_{\mathbb{R}^3} \partial_t(\partial_t+\partial_r) u\partial_t \Gamma^au\frac{x_\nu}{r}\partial_t\Gamma^a u\cdot e^{-q}dx\\
&\ \ \ \ +\frac{1}{2}B_{00\nu}\sum_{|a|= s-1}\int_{\mathbb{R}^3} \partial_t\partial_r u\partial_t \Gamma^au\frac{x_\nu}{r}\partial_t\Gamma^a u\cdot e^{-q}dx\\
&:=N_{11}+N_{12}+N_{13}.
\end{split}
\end{equation}
2)For $\mu=\nu=0$, $\lambda$ $\in\{1, 2, 3\}$, we have
\begin{equation}
\begin{split}
&\ \ \ \ \frac{1}{2}B_{\lambda00}\sum_{|a|= s-1}\int_{\mathbb{R}^3} \partial_t\partial_\lambda u\partial_t \Gamma^au\partial_t\Gamma^a u\cdot e^{-q}dx\\
&=\frac{1}{2}B_{\lambda00}\sum_{|a|= s-1}\int_{\mathbb{R}^3} \partial_t\partial_r u\frac{x_\lambda}{r}\partial_t \Gamma^au\partial_t\Gamma^a u\cdot e^{-q}dx\\
&\ \ \ \ -\frac{1}{2}B_{\lambda00}\sum_{|a|= s-1}\int_{\mathbb{R}^3} \partial_t(\frac{x}{r^2}\wedge \Omega)_\lambda u\partial_t \Gamma^au\partial_t\Gamma^a u\cdot e^{-q}dx\\
&:=N_{21}+N_{22}.
\end{split}
\end{equation}
3)For $\lambda=\nu=0$, $\lambda$ $\in\{1, 2, 3\}$, we have
\begin{equation}
\begin{split}
&\ \ \ \ \frac{1}{2}B_{0\mu0}\sum_{|a|= s-1}\int_{\mathbb{R}^3} \partial_t\partial_t u\partial_\mu \Gamma^au\partial_t\Gamma^a u\cdot e^{-q}dx\\
&=\frac{1}{2}B_{0\mu0}\sum_{|a|= s-1}\int_{\mathbb{R}^3} \partial_t\partial_t u(\partial_\mu \Gamma^au+\partial_t \Gamma^au\frac{x_\mu}{r})\partial_t\Gamma^a u\cdot e^{-q}dx\\
&\ \ \ \ -\frac{1}{2}B_{0\mu0}\sum_{|a|= s-1}\int_{\mathbb{R}^3} \partial_t(\partial_t+\partial_r) u\partial_t \Gamma^au\frac{x_\mu}{r}\partial_t\Gamma^a u\cdot e^{-q}dx\\
&\ \ \ \ +\frac{1}{2}B_{0\mu0}\sum_{|a|= s-1}\int_{\mathbb{R}^3} \partial_t\partial_r u\partial_t \Gamma^au\frac{x_\mu}{r}\partial_t\Gamma^a u\cdot e^{-q}dx\\
&:=N_{31}+N_{32}+N_{33}.
\end{split}
\end{equation}
For the estimate of the terms $N_{11}, N_{31}$, we proceed as in $J_{11}$ to obtain:
\begin{equation}
\begin{split}
N_{11}+ N_{31}\leq \frac{C}{1+t}E_sG^{\frac{1}{2}}\leq\frac{C}{(1+t)^2}E_s^2+\epsilon G.
\end{split}
\end{equation}
The terms $N_{22}$ are handled in a similar fashion as $J_{132}$\\
\begin{equation}
\begin{split}
N_{22}\leq\frac{C}{(1+t)^2}E_s^{\frac{3}{2}}.
\end{split}
\end{equation}
The terms $N_{12}, N_{32}$ are handled in a similar fashion as $K_1$. By Lemma \ref{lemma4}, Lemma \ref{lemma3}
\begin{equation}
N_{12}+N_{32}\leq\frac{C}{(1+t)^2}E^{\frac{3}{2}}_s(u(t)).
\end{equation}
{\bf Case d:} For $\lambda=\mu=\nu=0$, we have
\begin{equation}
\begin{split}
&\ \ \ \ \frac{1}{2}B_{000}\sum_{|a|= s-1}\int_{\mathbb{R}^3} \partial_t\partial_t u\partial_t \Gamma^au\partial_t\Gamma^a u\cdot e^{-q}dx\\
&=\frac{1}{2}B_{000}\sum_{|a|= s-1}\int_{\mathbb{R}^3} \partial_t(\partial_t+\partial_r) u\partial_t \Gamma^au\partial_t\Gamma^a u\cdot e^{-q}dx\\
&\ \ \ \ -\frac{1}{2}B_{000}\sum_{|a|= s-1}\int_{\mathbb{R}^3} \partial_t\partial_r u\partial_t \Gamma^au\partial_t\Gamma^a u\cdot e^{-q}dx\\
&=O_1+O_2.
\end{split}
\end{equation}
The terms $O_1$ are handled in a similar fashion as $K_1$. By Lemma \ref{lemma4}, Lemma \ref{lemma3}
\begin{equation}
O_1\leq\frac{C}{(1+t)^2}E^{\frac{3}{2}}_s(u(t)).
\end{equation}
Finally, noticing that $x_0=-1$, the terms $J_{131}, K_{23}, L_2, M_2, N_{13}, N_{21}, N_{33}, O_{2}$ vanish since the null condition.
Therefore, we deduce from the four cases above that
\begin{equation}
\begin{split}
J_1\leq\frac{C}{(1+t)^2}E^2_s(u(t))+\frac{C}{(1+t)^2}E^{\frac{3}{2}}_s(u(t))+\frac{C}{(1+t)^{\frac{3}{2}}}E^{\frac{3}{2}}_s(u(t))+\epsilon G,
\end{split}
\end{equation}
for the portion of the integrals over $|x|\geq\frac{t+1}{2}$.\\
{\bf Estimates for $J_4$}.\\
$J_4$ is treated in a similar (in fact, easier) way as $J_1$, so we omit the detail, here we just point out the difference. For example, for $\lambda=\mu=\nu=0$, we have
\begin{equation}
\begin{split}
&\ \ \ \ \frac{1}{2}B_{000}\sum_{|a|= s-1}\int_{\mathbb{R}^3} \partial_t u\partial_t \Gamma^au\partial_t\Gamma^a u\cdot e^{-q}q'dx\\
&=\frac{1}{2}B_{000}\sum_{|a|= s-1}\int_{\mathbb{R}^3} (\partial_t+\partial_r) u\partial_t \Gamma^au\partial_t\Gamma^a u\cdot e^{-q}q'dx\\
&\ \ \ \ -\frac{1}{2}B_{000}\sum_{|a|= s-1}\int_{\mathbb{R}^3} \partial_r u\partial_t \Gamma^au\partial_t\Gamma^a u\cdot e^{-q}q'dx\\
&:=J_{41}+J_{42}.
\end{split}
\end{equation}
Using Lemma \ref{lemma4}, we obtain
\begin{equation}
\begin{split}
J_{41}\leq\int_{\mathbb{R}^3} \frac{\Gamma u}{1+t}\partial \Gamma^au\partial\Gamma^a u\cdot e^{-q}q'dx.
\end{split}
\end{equation}
Using Lemma \ref{lemma2}, we obtain
\begin{equation}
\begin{split}
J_{41}&\leq \frac{C}{(1+t)^{\frac{1}{2}}}\int_{\mathbb{R}^3} \frac{r^{\frac{1}{2}}\Gamma u}{1+t}\partial \Gamma^au\partial\Gamma^a u\cdot e^{-q}q'dx\\
&\leq\frac{C}{(1+t)^{\frac{3}{2}}}E_s^{\frac{3}{2}}.
\end{split}
\end{equation}
for the portion of the integrals over $r\geq\frac{t+1}{2}$.
This is the crucial point that helps us obtain the energy. Therefore, we deduce that
\begin{equation}
\begin{split}
J_4\leq\frac{C}{(1+t)^2}E^2_s(u(t))+\frac{C}{(1+t)^{\frac{3}{2}}}E^{\frac{3}{2}}_s(u(t))+\epsilon G.
\end{split}
\end{equation}
{\bf Estimates for $J_5, J_6, J_7$}. \\
To complete the proof of Theorem \ref{Main},
it remains to deal with the terms $J_5$, $J_6$, $J_7$, which could be estimated of the form:
\begin{equation}
\begin{split}
A=\sum_{|a|\leq s-1}\sum_{\substack{b+c=a\\b\neq a, c\neq a}}\int_{\mathbb{R}^3} \partial^2\Gamma^bu\partial \Gamma^c u\partial_t\Gamma^a u\cdot e^{-q}dx.
\end{split}
\end{equation}
We notice that, by Lemma \ref{N-inequality}, Lemma \ref{lemma1}, we have
\begin{equation}
\begin{split}
A&\leq\sum_{|a|\leq s-1}\sum_{\substack{b+c=a\\b\neq a, c\neq a}}\frac{C}{1+t}\int_{\mathbb{R}^3} |\partial^2\Gamma^bu||\Gamma^{c+1} u|\partial\Gamma^a u| e^{-q}dx\\
&\ \ \ \ +\sum_{|a|\leq s-1}\sum_{\substack{b+c=a\\b\neq a, c\neq a}}\frac{C}{1+t}\int_{\mathbb{R}^3} |\partial\Gamma^{b+1}u||\partial\Gamma^c u|\partial\Gamma^a u| e^{-q}dx\\
&:=P_1+P_2.
\end{split}
\end{equation}
In order to estimate the terms $P_2$, notice that $|b+c|\leq s-1$, thus
there is one factor with at most $[\frac{s-1}{2}]$ derivatives. This term
is estimated in $L^\infty$ and the others in $L^2$. The result is that
\begin{equation}
\begin{split}
P_2\leq\frac{C}{(1+t)^{\frac{3}{2}}}E^{\frac{3}{2}}_s(u(t)).
\end{split}
\end{equation}
The terms $P_1$ are much more complicated, here, Lemma \ref{Hidano} play an important role, in order to obtain the energy,
we shall separate the cases in the summation of $P_1$ as follows:
\begin{equation}
\begin{split}
P_1&=\sum_{|a|\leq s-1}\sum_{\substack{b+c=a\\b\neq a, c\neq a}}\frac{C}{1+t}\int_{\mathbb{R}^3} |\partial^2\Gamma^bu||\Gamma^{c+1} u|\partial\Gamma^a u| e^{-q}dx\\
&=\sum_{|a|\leq s-1}\sum_{\substack{b+c=a\\b\neq a, |c|= |a|-1}}\frac{C}{1+t}\int_{\mathbb{R}^3} |\partial^2\Gamma^bu||\Gamma^{c+1} u|\partial\Gamma^a u| e^{-q}dx\\
&\ \ \ \ +\sum_{|a|\leq s-1}\sum_{\substack{b+c=a\\b\neq a, |c|\leq |a|-2}}\frac{C}{1+t}\int_{\mathbb{R}^3} |\partial^2\Gamma^bu||\Gamma^{c+1} u|\partial\Gamma^a u| e^{-q}dx\\
&=P_{11}+P_{12}.
\end{split}
\end{equation}
Using Lemma \ref{Hidano}, Lemma \ref{lemma2}, Lemma \ref{lemma1},
the terms $P_{11}$ are estimated as follows:
\begin{equation}
\begin{split}
P_{11}&=\sum_{|a|\leq s-1}\sum_{\substack{b+c=a\\b\neq a, |c|= |a|-1}}\frac{C}{1+t}\int_{\mathbb{R}^3} \langle t-r\rangle|\partial^2\Gamma^bu|\frac{|\Gamma^{c+1} u|}{\langle t-r\rangle}|\partial\Gamma^a u| e^{-q}dx\\
&\leq\frac{C}{(1+t)^{\frac{3}{2}}} \|r^{\frac{1}{2}}\langle t-r\rangle\partial^2\Gamma^bu\|_{L^\infty}\|\frac{|\Gamma^{c+1} u|}{\langle t-r\rangle}\|_{L^2}\|\partial\Gamma^a u e^{-q/2}\|_{L^2}\\
&\leq \frac{C}{(1+t)^{\frac{3}{2}}}E^{\frac{3}{2}}_s(u(t)).\\
\end{split}
\end{equation}
On the other hand, in order to obtain the energy,
we deal with the terms $P_{12}$ as follows:
\begin{equation}
\begin{split}
P_{12}=\sum_{|a|\leq s-1}\sum_{\substack{b+c=a\\b\neq a, |c|\leq |a|-2}}\frac{C}{1+t}\int_{\mathbb{R}^3} \frac{1}{r^{\frac{1}{2}}}|\partial^2\Gamma^bu|r^{\frac{1}{2}}|\Gamma^{c+1} u||\partial\Gamma^a u| e^{-q}dx.\\
\end{split}
\end{equation}
Using Lemma \ref{lemma2} again, we have
\begin{equation}
\begin{split}
P_{12}\leq \frac{C}{(1+t)^{\frac{3}{2}}}E^{\frac{3}{2}}_s(u(t)).\\
\end{split}
\end{equation}
Collecting the estimates above and taking (\ref{E_P}) into account,
we obtain the following energy inequality:
\begin{equation}
\begin{split}
\widetilde{E}'_s(u(t))+G\leq\frac{C}{(1+t)^2}\widetilde{E}^2_s(u(t))+
\frac{C}{(1+t)^2}\widetilde{E_s}^{\frac{3}{2}}(u(t))+\frac{C}{(1+t)^{\frac{3}{2}}}\widetilde{E}^{\frac{3}{2}}_s(u(t))+\epsilon G,
\end{split}
\end{equation}
for the portion of the integrals over $|x|\geq\frac{t+1}{2}$.
\section{Proof of Theorem \ref{Main}}
Combining all the estimates in section 3, we obtain
\begin{equation}
\begin{split}
\widetilde{E}'_s(u(t))+G\leq\frac{C}{(1+t)^2}\widetilde{E}^2_s(u(t))+
\frac{C}{(1+t)^2}\widetilde{E}^{\frac{3}{2}}_s(u(t))+\frac{C}{(1+t)^{\frac{3}{2}}}\widetilde{E}^{\frac{3}{2}}_s(u(t))+\epsilon G.
\label{LE}
\end{split}
\end{equation}
Since we have assumed that
$$E_{s}(u(0))<\epsilon,$$
it follows from (\ref{E_P}) that
$$\widetilde{E}_s(u(0))<2 \epsilon.$$
Now define
\begin{equation}
T=\sup\{t, \widetilde{E}_s(u(s))<4 \epsilon, 0\leq s\leq t, \epsilon<\frac{1}{4}\},
\end{equation}
then by continuity, $T>0$. From now on, let $0\leq t\leq T$. And from (\ref{LE}), we have
\begin{equation}
\begin{split}
\widetilde{E}'_s(u(t))+(1-\epsilon) G\leq \frac{C}{(1+t)^{\frac{3}{2}}}\widetilde{E}^{\frac{3}{2}}_s(u(t)).
\end{split}
\end{equation}
Thus we have
\begin{equation}
\begin{split}
\widetilde{E}'_s(u(t))\leq \frac{C}{(1+t)^{\frac{3}{2}}}\widetilde{E}^{\frac{3}{2}}_s(u(t))\leq \frac{C}{(1+t)^{\frac{3}{2}}}\widetilde{E}_s(u(t))(4\epsilon)^{\frac{1}{2}}.
\end{split}
\end{equation}
From which, we obtain the following inequality
\begin{equation}
\begin{split}
\widetilde{E}_s(u(t))\leq \widetilde{E}_s(u(0))e^{(4\epsilon)^{\frac{1}{2}}C(1-(1+t)^{-\frac{1}{2}})}.
\end{split}
\end{equation}
Choosing $\epsilon>0$ such that $\epsilon$ satisfies
$$e^{(4\epsilon)^{\frac{1}{2}}C}\leq \frac{3}{2}.$$
Clearly, with such kind of restriction, we obtain
$$\epsilon\leq\frac{1}{4}(\frac{1}{C}\ln\frac{3}{2})^2.$$
Hence, we have
\begin{equation}
\begin{split}
\widetilde{E}_s(u(t))\leq 3\epsilon.
\end{split}
\end{equation}
Thus, $T$ cannot be finite, so $T=\infty$. This completes the proof of Theorem \ref{Main}.

%----------------------------------------------------------------------------acknowledgement
\section*{Acknowledgement}
The author would like to thank Prof. Zhen Lei for direction
and valuable comments on this paper.

%-----------------------------------------------------------------------------bibliography

\end{document}